\documentclass{article}

\usepackage{rhee}
\usepackage{mathtools}
\usepackage{bbm}
\usepackage[font=small, labelfont=bf]{caption}
\usepackage{graphicx}    
\usepackage{subcaption}  

\DeclarePairedDelimiter\floor{\lfloor}{\rfloor}

\numberwithin{equation}{section}

\begin{document}
\title{Unbiased Derivative Estimation for Stationary Mean of Parameterized Markov chains}
\author{Jeffrey Wang, Chang-Han Rhee}
\date{November 11, 2024}
\maketitle


\begin{center}
\emph{(This is a preliminary version. Comments are welcome. The notation and exposition will be refined in a forthcoming update.)}
\end{center}

\begin{abstract}
\noindent
We propose a new approach to unbiased estimation of the gradients of the stationary means associated with parametrized families of Markov chains. 
Our estimators are particularly efficient when the Markov chains have slow mixing rate. Our approach does not require a specific parametrization except for an oracle to evaluate the transition density and its gradient at a given data point without any additional knowledge about the density function itself. 
It makes our estimator suitable for parametrizations associated with neural networks. 
The estimator can potentially achieve large improvement in terms of efficiency. 
Numerical experiments confirm the good performance predicted by the theory.
\end{abstract}

\section{Introduction \jw{Ready to be reviewed}}
Consider a Markov chain $\newnota{chain-X}{X} = \{X_i\}_{i\geq 0}$ on a general state space \newnota{state-space}{$\mathcal{X}$}. 
Let $\newnota{P-theta}{P(\theta)}:= \left(P(\theta, x, dy): x, y \in \mathcal{X}\right)$ denote its transition kernel parametrized by $\theta \in \newnota{Lam}{\Theta}$. We assume that there exists a unique invariant distribution $\newnota{pi-theta}{\pi(\theta)}$ associated with $P(\theta)$. 
Now let $\newnota{gamma}{\gamma(\theta)} := \mathbb{E}_{\pi(\theta)}[f(X)]$ denote the stationary mean for some integrable function $\newnota{f}{f}: \mathcal{X} \rightarrow \mathbb{R}$ under $\pi(\theta)$. 
The goal of our work is to estimate the derivative of the stationary mean, denoted by
\begin{equation}\label{eq: gamma-prime}
    \newnota{gamma-prime}{\gamma'(\theta)} := \frac{d}{d\theta}\mathbb{E}_{\pi(\theta)}[f(X)].
\end{equation}
Such derivatives are essential for sensitivity analysis of the long-run average performance of a system or for optimization of the performance with respect to the parameter $\theta$. 
However, the explicit evaluation of derivatives are rarely possible and one typically needs to estimate the derivatives computationally. 
When efficient and unbiased derivative estimators are available, numerical optimization algorithms such as stochastic gradient decent (SGD) converge more quickly.
Throughout the paper, efficiency of an estimator is measured by the reciprocal of the product of the expected work and variance of the estimator; see \cite{glynn1992asymptotic}.

Our approach builds on a few recent lines of research.
The first line concerns the general theory \cite{rhee2015unbiased} of \emph{debiasing} simulation estimators. 
In \cite{glynn2014exact}, it was shown that this idea, together with a coupling technique, can be used to construct unbiased estimators for the stationary means of positive Harris recurrent chains. 
\cite{jacob2020unbiased} further studied and exploits the coupling technique to bring it to the Markov chain Monte Carlo (MCMC) setting where they construct unbiased estimators for integrals with respect to a target distribution. 
Recently \cite{douc2022solving} also applied similar coupling idea to construct unbiased estimators for the solutions to the Poisson's equation associated with Markov chains, 
and from there they develop unbiased estimator for the asymptotic variance of the Markov chain. 
In this work, we address an estimation problem of yet another limiting quantity, the derivative of the stationary mean.

There have been a substantial body of works that aim to estimate such a derivative. 
Whenever an atom is available or can be easily constructed, we may utilize the regeneration structure of the Markov chain to construct a derivative estimator,  see \cite{glynn1995likelihood}, for example. 
However, since the estimator is a non-linear function of sample averages, it is an asymptotically consistent estimator but biased for any finite realizations. 
More importantly, this approach can be hard to implement if regeneration states are hard to be identified or the constructed regeneration cycle is too long to be efficient. The Infinitesimal Perturbation Analysis (IPA) estimator for $\gamma'(\theta)$ has also been developed in \cite{glasserman1993regenerative}. 
It transforms the derivative estimation problem into a stationary mean estimation problem by constructing the \emph{derivative process}. 
Although the estimator is unbiased and the efficiency typically triumphs other estimators when available, the method can be implemented on a very restricted class of parametrizations. 
\cite{heidergott2006measure},\cite{heidergott2010perturbation} provide another way to estimate $\gamma'(\theta)$ by applying the Hahn-Jordan decomposition to decompose the derivative of transition kernel $P'(\theta)$ into two positive kernels $\newnota{P+}{P^{+}(\theta)}$ and $\newnota{P-}{P^{-}(\theta)}$. 
They simulate two chains separately starting from those two kernels and produce an estimator, which they call Phantom estimator. 
Potentially, this could avoid requiring constructing atoms and be applied to a broader class of parametrizations but it still needs detailed knowledge about the transition kernel $P(\theta)$ to (i) perform the Hahn-Jordan decomposition, (ii) sample from the decomposed kernels and (iii) compute the normalizing constants for the decomposed parts. 
A more recent work \cite{glynn2019likelihood} proposed two more likelihood ratio estimators for $\gamma'(\theta)$ that does not require constructing regenerative cycles. 
Moreover, this approach requires the knowledge of the the stationary mean $\gamma(\theta)$ which is often unavailable. 
Those likelihood ratio estimators are asymptotically consistent and so far have the fewest requirements to implement. 
However, as the bias gets smaller, the efficiency gets worse and a difficult trade-off has to be made here. 
Specific implementation details and numerical results of the aforementioned methods are deferred to Section \ref{sec-NU}.

Our contribution builds on the theoretical results from \cite{rhee2017lyapunov} and we propose an unbiased estimator for $\gamma'(\theta)$. 
The requirement to implement this estimator is modest. 
To be specific, it only requires an oracle which given a parameter $\theta$ and a state pair $(x, y)$, outputs the one-step transition density $P(\theta, x, dy)$ and its derivative $P'(\theta, x, dy)$ with respect to $\theta$. 
Our method does not require constructing regeneration time nor analytical knowledge of the transition kernel $P(\theta)$. 
This is desirable in modern optimization setting as often the kernel is parametrized by complicated function approximators such as neural networks (NNs), for which it is almost impossible to obtain analytic knowledge. 
Meanwhile, an oracle described above can be obtained if the transition kernel is parameterized by NNs. 
Specifically, given the current NN weights $\theta$ and a state pair $(x, y)$, $P(\theta, x, dy)$ and $P'(\theta, x, dy)$ can be numerically computed using forward-feeding and back-propagation of the NNs respectively. Section \ref{sec-application} provides examples regarding Markov chains whose transition kernels are parametrized by NNs. In addition to the previously stated theoretical desirability, our estimator can be surprisingly efficient as well when tuned appropriately. Numerical experiments show that for slow mixing Markov chains which are the difficult scenarios, the efficiency of our derivative estimator achieves a similar level of performance with the IPA estimator and surpasses almost all previous estimators.

The rest of the paper is organized as follows: Section \ref{sec-prelim} contains the preliminary background for coupling technique and how it can be used to construct unbiased estimators for the stationary means and solutions to the Poisson's equation. Section \ref{sec-UDE} contains the development of our unbiased derivative estimators. Section \ref{sec-EA} provides efficiency analysis of the previously developed estimator and gives a general guideline for choosing parameters based on the analysis results. Section \ref{sec-NU} shows the empirical performance of our estimator comparing with the previously developed ones in a queueing example.

\section{Preliminaries \jw{Ready to be reviewed}}\label{sec-prelim}
\subsection{Construction of coupled Markov chains}
\chr{add line-breaks between every sentence}\jw{done}
Assume we have a joint kernel $\newnota{P-bar}{\Bar{P}(\theta)}$ that evolves on the product space $\mathcal{X} \times \mathcal{X}$ such that the marginal transition probabilities follow $P(\theta)$, i.e. $\Bar{P}(\theta, (x, y), \mathcal{A} \times \mathcal{X}) = P(\theta, x, \mathcal{A})$ and $\Bar{P}(\theta, (x, y), \mathcal{X} \times \mathcal{A}) = P(\theta, y, \mathcal{A})$.
Furthermore, $\Bar{P}(\theta)$ satisfies the property that the marginal chains stay together after they meet. 
That is, $\bar P(\theta, (x,x), \cdot)$ is supported on the diagonal $\{(y,z)\in \mathcal X \times \mathcal X: y = z\}$ for any $x \in \mathcal X$.
Let $X_0$, $Y_0 \sim \mu_0$ where $\newnota{mu-0}{\mu_0}$ is a distribution on $\mathcal{X}$.
For $\newnota{L}{L} \geq 1$, we construct an $L$-lagged pair of coupled Markov chain $\newnota{couple-xy}{\left(X, Y\right)}:= \{X_i, Y_i\}_{i\geq 0}$. 
We first run chain $X$ for $L$ steps where we generate $X_i \sim P(\theta, X_{i-1}, \cdot)$ for $i \leq L$. 
We then run the joint chain and generate future steps according to the joint kernel $\Bar{P}(\theta)$ such that for $i \geq 1$, $\left(X_{i+L}, Y_{i}\right) \sim \Bar{P}\left(\theta, (X_{i+L-1}, Y_{i-1}), \cdot\right)$. Let $\newnota{tau^L}{\tau^L} := \inf\left\{i \geq L: X_{i} = Y_{i-L}\right\}$ and note that $f(X_{i})=f(Y_{i-L})$ for all $i \geq \tau^L$.
In quite general contexts, the coupling kernel $\Bar{P}$ can be constructed in various ways, including maximum coupling and common random numbers. See \cite{lindvall2002lectures} for more details.

\subsection{Notations and Assumptions}
First we define some notations. For a function $h: \mathcal{X} \rightarrow \mathbb{R}$, a transition kernel $P$, a state $x \in \mathcal{X}$ and a probability measure $\mu$, define the following notation:
\begin{equation}
    \newnota{Ph}{Ph(x)} := \int_{\mathcal{X}}h(y)P(x, dy);
\end{equation}
\begin{equation}
    \newnota{mu-h}{\mu h} := \int_{\mathcal{X}}h(y)\mu(dy).
\end{equation}
\chr{Are above $x$ and $y$ correct?}\jw{Notation fixed}
Furthermore, for a function $V: \mathcal{X} \rightarrow \mathbb{R}$, we define the $V$-norm of function $h$ as
\begin{equation}
    \|h\|_{V} := \sup_{x \in \mathcal{X}}\frac{|h(x)|}{V(x)}.
\end{equation}

We make the following assumptions throughout the rest of the paper \todo{specify the range}\jw{Range is throughout all the sections}.
\begin{assumption}\label{assumption: r-g-A4}
\chr{Why was this assumption in red font?}
\jw{Fixed}
\chr{How about $\Lambda \to \Theta$? }
\jw{Changed all $\Lambda \to \Theta$}
\chr{absolute continuity doesn't have to be w.r.t.\ $P(\theta_0)$. Instead, it would suffice that $P(\theta)$ are absolutely continuous w.r.t.\ $P(\theta^*)$ for some $\theta^* \in \Theta$.}\jw{Do you want me to change $\theta_0$ to some $\theta^{*} \in \Theta$ for the paper?}
    Assumption A4/A5 in \cite{rhee2017lyapunov}: The family of one-step transition kernels $(P(\theta): \theta \in \Theta)$ is absolutely continuous with respect to $P\left(\theta_0\right)$, in the sense that there exists a density $(p(\theta, x, y): \theta \in \Theta, x, y \in \mathcal{X})$ for which
    $$
    P(\theta, x, d y)=p(\theta, x, y) P\left(\theta_0, x, d y\right)
    $$
    for $x, y \in \mathcal{X}$, and $\theta \in \Theta$. Furthermore, there exists $\epsilon>0$ for which $p(\cdot, x, y)$ is continuously differentiable on $\left[\theta_0-\epsilon, \theta_0+\epsilon\right]$ for each $x, y \in \mathcal{X}$. Set $\newnota{omega-epsilon}{\omega_\epsilon(x, y)}:=\sup _{\left|\theta-\theta_0\right|<\epsilon}\left|p^{\prime}(\theta, x, y)\right|$. Also assume that there exists a subset $A \subseteq \mathcal{X}$, an integer $n \geq 1, \beta>0$, 
    \chr{What is $\beta$?}\chr{What is $S$?}\jw{Notation fixed}
    and a probability measure $\varphi$ for which
    $$
    P^n(\theta, x, d y) \geq \beta \varphi(d y)
    $$
    for $x \in A, y \in \mathcal{X}$, and $\left|\theta-\theta_0\right|<\epsilon$.
\end{assumption}
\begin{assumption}\label{assumption: drift}
    For a small set $\newnota{K}{K}$, suppose there exists a measurable function $\newnota{V}{V}: X \rightarrow[1, \infty)$ and constants $0<\lambda<1, b<\infty$ such that
    \begin{equation}\label{eq: A-3-2}
    P(\theta) V(x) \leq \lambda V(x)+b \mathbbm{1}(x \in K),
    \end{equation}
    and the chains induced by $P(\theta)$ are irreducible and aperiodic for $\theta$ such that $|\theta - \theta_0| < \epsilon$. 
\end{assumption}
\begin{remark}
    If we iteratively apply the $P(\theta)$ operator on both sides of \eqref{eq: A-3-2}, assumption \ref{assumption: drift} implies that for any $n \geq 1$:
    \begin{equation}\label{eq: V-bar}
        P^n(\theta)V(x) \leq V(x)+ \frac{b}{1-\lambda} := \newnota{Bar-V}{\Bar{V}(x)}.
    \end{equation}
\end{remark}
\begin{remark}
    The Lyapunov condition assumed on function $V$ implies that
    \begin{equation}
        P(\theta)\sqrt{V(x)} \leq \sqrt{P(\theta)V(x)} \leq \sqrt{\lambda}\sqrt{V(x)} + \sqrt{b}\mathbbm{1}(x \in K).
    \end{equation}
    Since $0 < \sqrt{\lambda} < 1$ and $\sqrt{b} < \infty$, the function $\sqrt{V}$ is also a Lyapunov function.
\end{remark}
\begin{remark}
    Assumption \ref{assumption: drift} also implies that for any $g: \mathcal{X}\rightarrow \mathbb{R}$ with $|g(x)| \leq |V(x)|$ for all $x \in \mathcal{X}$, there exist constants $\newnota{R}{R} < \infty$ and $\newnota{r}{r} < 1$ such that
    \begin{equation}\label{eq: remark}
        \left|\mathbb{E}_{x}\left[g(X_n)\right] - \pi(\theta)g\right| \leq RV(x)r^n,
    \end{equation}
    and 
    \begin{equation}\label{eq: remark2}
        \left|\mathbb{E}_{x}\left[\sqrt{g(X_n)}\right] - \pi(\theta)\sqrt{g}\right| \leq R\sqrt{V(x)}r^n,
     \end{equation}
    for all $x \in \mathcal{X}$ and $n \geq 1$.
\end{remark}
\begin{assumption}\label{assumption-starting}
    We also assume our starting distribution $\mu_0$ satisfies that $\mu_0 V < \infty$.
\end{assumption}
\begin{assumption}\label{assumption-coupling}
    Let $\newnota{tau-xz}{\tau_{x, z}}$ be the coupling time of two chains starting from state $x$ and state $z$ respectively. Assume the coupling time satisfies a geometrically decaying tail probability, i.e.
    \begin{equation}\label{eq: A-tail}
        P(\tau_{x, z} > t) \leq M\left(V(x) + V(z)\right)\rho^t,
    \end{equation}
    for some $R < \newnota{M}{M} < \infty$ and $r < \newnota{rho}{\rho} <1$.
\end{assumption}
Assumption \ref{assumption: r-g-A4} is carried directly from \cite{rhee2017lyapunov}. 
Assumption \ref{assumption: drift} sometimes can be used to verify Assumption \ref{assumption-coupling}. 
See Section 3.2 in \cite{jacob2020unbiased} for additional assumptions and details to do this. 
Assumption \ref{assumption-coupling} can be extended to polynomially decaying tail probabilities, see \cite{douc2022solving}\cite{atchade2024unbiased}.

\subsection{Unbiased Estimation for the Stationary Mean}\label{sec: stationary-mean}
The intuitive arguments that lead to an unbiased estimator for the stationary mean starts from a telescoping sum technique \cite{glynn2014exact}. Under some regularity condition that guarantee exchange of limits, for any $\newnota{k}{k}\geq 0$:
\chr{under some regularity condition that guarantee exchange of limits}\jw{added}
\begin{align}
    \gamma(\theta) &= \lim_{t \rightarrow \infty}\mathbb{E}\left[f(X_t)\right]\label{eq: begin-g}\\
    &= \mathbb{E}\left[f(X_k)\right] + \sum_{t=1}^\infty \left(\mathbb{E}\left[f(X_{k + tL})\right] - \mathbb{E}\left[f(X_{k+(t-1)L})\right]\right)\\
    &= \mathbb{E}\left[f(X_k)\right] + \sum_{t=1}^{\infty} \left(\mathbb{E}\left[f(X_{k + tL})\right] - \mathbb{E}\left[f(Y_{k+(t-1)L})\right]\right)\\
    &= \mathbb{E}\left[f(X_k)  + \sum_{t=1}^{\floor*{\frac{\tau^L-k}{L}}} \left(f(X_{k + tL}) - f(Y_{k+(t-1)L})\right)\right],\label{eq: end-g}
\end{align}
\chr{above equation: parentheses in the summation}\jw{parentheses added}
where the third equality follows from the fact that both $X$ and $Y$ evolves according $P(\theta)$ and the last equality follows from that $X_{i+L}=Y_{i}$ for all $i \geq \tau^L$. 
\chr{$X_{i+1}$?}\jw{index fixed}
Therefore,
\begin{equation}\label{eq: H-kL}
    \newnota{Hk}{H_k^L} := f(X_k)  + \sum_{t=1}^{\floor*{\frac{\tau^L-k}{L}}} \left(f(X_{k + tL}) - f(Y_{k+(t-1)L})\right)
\end{equation}
would be an unbiased estimator for $\gamma(\theta)$ for any $k \geq 0$ if \eqref{eq: begin-g} to \eqref{eq: end-g} hold rigorously and so is an average of $H_k^L$ for different $k$ values $\newnota{H-km}{H_{k:m}^L} :=\frac{1}{m-k+1} \sum_{t=k}^m H_t^L$, where $\newnota{m}{m} \geq k$. We may rewrite it as
\begin{equation}\label{eq: H-kmL}
H_{k:m}^L =\frac{1}{m-k+1} \sum_{t=k}^m f(X_t) + \frac{1}{m-k+1} \sum_{t=k+L}^{\tau^L-1} c_t \left(f(X_t) - f(Y_{t-L})\right),
\end{equation}
where 
\begin{equation}
    \newnota{ct}{c_t} = \floor*{\frac{\min(m+L, t) - k - (t - k)\% L}{L}}.
\end{equation}
If $L = 1$, then $H_{k:m}^L$ is equivalent to the stationary mean estimator developed in \cite{jacob2020unbiased}. However, choosing a larger $L$ is proposed and advocated in \cite{vanetti2020} as it generally improves the performance of the estimator, sometimes dramatically.
The intuition behind choosing a larger $L$ is that the coefficients $c_t$ in \eqref{eq: H-kmL} can be greatly reduced and make $H_{k:m}^L$ behave more like a long-run average.
See \cite{douc2022solving}, \cite{atchade2024unbiased} and \cite{ruiz2021unbiased} for more detailed analysis and examples of $H_{k:m}^L$.

\subsection{Unbiased Estimation for the solution to the Poisson's Equation}
Let $\newnota{g}{g}$ denote the solution to the Poisson's equation such that for any $x \in \mathcal{X}$, the following equality holds
\begin{equation}\label{eq: Poisson}
    g(x) - P(\theta)g(x) = f(x) - \pi(\theta)f,
\end{equation}
The solution $g$ has been shown to play an important role in analyzing the sum of the form $\sum_{t=0}^{n}f(X_t)$ for Markov chains \cite{glynn2024solution}.
Past works have analyzed the uniqueness of the solution and its connections to the Lyapunov conditions, see \cite{bhulai2003uniqueness}, \cite{glynn1996liapounov} and \cite{rhee2017lyapunov}. 
The most basic form the solution function $g$ could take is termed the \textbf{fundamental solution} to the Poisson's equation and is defined as:
\begin{equation}
    \newnota{g-funda}{g^{fu}(x)} : = \mathbb{E}_x^{\theta}\left[\sum_{t=0}^{\infty}\left(f(X_t) - \pi(\theta)f\right)\right].
\end{equation}
Note that \eqref{eq: Poisson} can admit various solutions that differ from each other by constants. 
One of the most exploited expression of the solution $g$ depends on the regeneration structure of the analyzed Markov chain and attempt to estimate the following form:
\begin{equation}
    \newnota{g-regen}{g^{re}_{\alpha}(x)} : = \mathbb{E}_x^{\theta}\left[\sum_{t=0}^{T(\alpha)}\left(f(X_t) - \pi(\theta)f\right)\right]
\end{equation}
where $\newnota{alp}{\alpha}$ is a regeneration state and $\newnota{T-alp}{T(\alpha)}:= \inf\{t > 0: X_t = \alpha\}$ is the regeneration time, e.g. in \cite{dai2022queueing}.
When the state space is uncountable, sometimes we are still able to construct regeneration times using a splitting technique \cite{nummelin1978splitting}.
However, the constructed regeneration time could be unnecessarily long causing a large variance for the estimator.
Since we do not assume the existence or the ability to construct efficient regeneration times, we focus on a different form that takes advantage of the existence of a coupling kernel, first introduced in \cite{douc2022solving}.
For a pre-specified state $z \in \mathcal{X}$, define:
\begin{equation}
\begin{aligned}
    \newnota{g-z}{g_z(x)} &:= g^{fu}(x) - g^{fu}(z)\\
    &=  \mathbb{E}_x^{\theta}\left[\sum_{t=0}^{\infty}\left(f(X_t) - \pi(\theta)f\right)\right] -  \mathbb{E}_z^{\theta}\left[\sum_{t=0}^{\infty}\left(f(X_t) - \pi(\theta)f\right)\right]\\
    &=  \mathbb{E}_{x, z}^{\theta}\left[\sum_{t=0}^{\infty}\left(f(X_t) - f(Z_t)\right)\right]\\
    &= \mathbb{E}_{x, z}^{\theta}\left[\sum_{t=0}^{\tau_{x,z}}\left(f(X_t) - f(Z_t)\right)\right]
\end{aligned}
\end{equation}
where $(X_t, Z_t)_{t=0,1,2,\dots}$ is a joint Markov chain that starts from $(x, z)$ and evolves according to $\Bar{P}(\theta)$ and $\tau_{x,z} = \inf\{t \geq 0: X_t = Z_t\}$.
Thus naturally $\sum_{t=0}^{\tau_{x,z}}\left(f(X_t) - f(Z_t)\right)$ would be an unbiased estimator for $g_z(x)$ which is a solution to \eqref{eq: Poisson}.

\section{Unbiased Derivative Estimators \jw{Ready to be reviewed}}\label{sec-UDE}
\subsection{Derivative estimator with stationary mean approach}\label{subsec-sma}
Let $\newnota{theta-0}{\theta_0}$ denote our current parameter at which we are trying to compute the derivative.
In this section, we introduce conditions ensuring differentiability of the steady-state mean and establish a probabilistic representation for its derivative $\gamma'(\theta_0)$, upon which we construct unbiased derivative estimators.
\begin{proposition}\label{proposition-differentiability}
    (Theorem 4.1 in \cite{rhee2017lyapunov}) With Assumption \ref{assumption: r-g-A4}, let $\kappa: \mathbb{R}_{+} \rightarrow \mathbb{R}_{+}$be a function for which $\kappa(x) \geq x$ and $\kappa(x) / x \rightarrow \infty$ as $x \rightarrow \infty$. Suppose that there exist positive constants $\epsilon, c_0$, and $c_1$, and non-negative finite-valued functions $q, v_0$, and $v_1$ for which
    $$
    \begin{aligned}
    & \left(P(\theta) v_0\right)(x) \leq v_0(x)-(q(x) \vee 1)+c_0 \mathbb{I}(x \in A) \\
    & \left(P(\theta) v_1\right)(x) \leq v_1(x)-\kappa\left(\int_S\left(1 \vee \omega_\epsilon(x, y)\right)\left(v_0(y)+1\right) P(\theta, x, d y)\right)+c_1 \mathbb{I}(x \in A)
    \end{aligned}
    $$
    for $x \in \mathcal{X},\left|\theta-\theta_0\right|<\epsilon$, and
    $$
    \sup _{x \in A} v_0(x)<\infty.
    $$
    Then
    \begin{equation}\label{eq: gp0}
        \gamma'(\theta_0) = \mathbb{E}^{\theta_0}_{\pi(\theta_0)}\left[p'(\theta_0, X_0, X_1)g(X_1)\right].
    \end{equation}
    where $g$ is a solution to \eqref{eq: Poisson}. 
\end{proposition}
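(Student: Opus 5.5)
The plan is to follow the perturbation route of \cite{rhee2017lyapunov}: compare the two invariant measures through the Poisson equation at $\theta_0$, then differentiate under the integral using the density $p(\theta,x,y)$ from Assumption \ref{assumption: r-g-A4}. Fix $\theta$ with $|\theta-\theta_0|<\epsilon$, and let $g$ solve \eqref{eq: Poisson} at $\theta_0$. Integrating \eqref{eq: Poisson} against $\pi(\theta)$ and using $\pi(\theta)P(\theta)=\pi(\theta)$ should give the exact identity
\begin{equation*}
\gamma(\theta)-\gamma(\theta_0)=\pi(\theta)f-\pi(\theta_0)f=\pi(\theta)\bigl(g-P(\theta_0)g\bigr)=\pi(\theta)\bigl(P(\theta)-P(\theta_0)\bigr)g .
\end{equation*}
By the absolute continuity in Assumption \ref{assumption: r-g-A4}, the right-hand side equals $\int\pi(\theta,dx)\int (p(\theta,x,y)-1)g(y)P(\theta_0,x,dy)$, using $p(\theta_0,\cdot,\cdot)\equiv 1$. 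This identity has to be justified, so the first step is to show that $\pi(\theta)|g|<\infty$ and $\pi(\theta)P(\theta)|g|<\infty$ uniformly for $\theta$ near $\theta_0$.

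For this integrability I would use the first drift inequality. Via the comparison theorem, it gives $\pi(\theta)(q\vee 1)\le c_0\,\pi(\theta)(A)$. It also gives a solution $g$ (for instance a regenerative solution built from the minorization on $A$) with $|g|\le c(v_0+1)$, provided $|f|\le q\vee 1$; I take that to be the implicit meaning of the assumption, with $q$ dominating $f$. The second drift condition, with $\kappa(x)\ge x$, then gives $\pi(\theta)\int(1\vee\omega_\epsilon)(v_0+1)\,dP(\theta,x,\cdot)\le c_1$ uniformly in $\theta$. This is the key uniform moment bound.

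Next I divide by $\theta-\theta_0$. By the mean value theorem, $|p(\theta,x,y)-1|/|\theta-\theta_0|\le\omega_\epsilon(x,y)$, and the ratio converges pointwise to $p'(\theta_0,x,y)$. The inner integral
\begin{equation*}
h_\theta(x):=\int\frac{p(\theta,x,y)-1}{\theta-\theta_0}\,g(y)P(\theta_0,x,dy)
\end{equation*}
therefore converges to $h(x):=\int p'(\theta_0,x,y)g(y)P(\theta_0,x,dy)$ by dominated convergence, provided $\int\omega_\epsilon(v_0+1)\,dP(\theta_0,x,\cdot)<\infty$. The same domination gives $|h_\theta(x)|\le c\,w(x)$ with $w(x):=\int(1\vee\omega_\epsilon)(v_0+1)P(\theta_0,x,dy)$.

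The main obstacle is passing to the limit in $\pi(\theta)h_\theta$, where both the measure and the integrand move. I would split it as
\begin{equation*}
\pi(\theta)h_\theta-\pi(\theta_0)h=\pi(\theta)(h_\theta-h)+\bigl(\pi(\theta)-\pi(\theta_0)\bigr)h .
\end{equation*}
For the second term I need $\pi(\theta)\to\pi(\theta_0)$ in a weighted sense that covers functions bounded by $w$. The plan is to show total variation continuity first, using the uniform minorization on $A$ and the uniform drift; Lipschitz continuity should follow from the identity above with $f$ replaced by bounded functions. This is then upgraded to $w$-weighted convergence by uniform integrability. That uniform integrability is exactly what the superlinear $\kappa$ in the second drift condition supplies, since $\sup_\theta\pi(\theta)\kappa(w)\le c_1$ with $\kappa(x)/x\to\infty$.

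The first term is handled the same way. Since $h_\theta-h\to0$ pointwise and $|h_\theta-h|\le c\,w$ is uniformly integrable under $\{\pi(\theta)\}$, a truncation argument should give convergence to $0$; here I would truncate on $\{w\le N\}$ and use total variation continuity on the bounded part. Combining the two terms yields $\gamma'(\theta_0)=\pi(\theta_0)h=\mathbb{E}^{\theta_0}_{\pi(\theta_0)}[p'(\theta_0,X_0,X_1)g(X_1)]$.

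Finally, any other solution of \eqref{eq: Poisson} differs from this $g$ by a constant under the stated recurrence. The formula is invariant under adding a constant, because $\int p'(\theta_0,x,y)P(\theta_0,x,dy)=\frac{d}{d\theta}\int p(\theta,x,y)P(\theta_0,x,dy)=0$, where the exchange is again justified by $\omega_\epsilon$.
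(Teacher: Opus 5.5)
The paper gives no argument of its own here; it imports the result from Theorem 4.1 of \cite{rhee2017lyapunov}. Judged on its own, your perturbation route is a reasonable self-contained strategy: fix $g=g_{\theta_0}$, let $\pi(\theta)$ move, prove total-variation continuity, then upgrade by uniform integrability. The following steps are all fine: the identity $\gamma(\theta)-\gamma(\theta_0)=\pi(\theta)\bigl(P(\theta)-P(\theta_0)\bigr)g$, the comparison-theorem bounds, the mean-value domination, and the truncation argument. You are also right that some domination $|f|\le q\vee 1$ must be implicit.

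\textbf{The gap.} It sits at the step you yourself call the main obstacle. You define $w(x)=\int(1\vee\omega_\epsilon(x,y))(v_0(y)+1)\,P(\theta_0,x,dy)$ and assert $\sup_\theta\pi(\theta)\kappa(w)\le c_1$. But in the $v_1$-condition as stated, the integral inside $\kappa$ is against $P(\theta,x,dy)$. The comparison theorem therefore yields only $\pi(\theta)\kappa(w_\theta)\le c_1$, where $w_\theta(x)=\int(1\vee\omega_\epsilon)(v_0+1)\,p(\theta,x,y)\,P(\theta_0,x,dy)$. This is the bound you correctly derived one paragraph earlier. Nothing bounds $p(\theta,x,y)$ from below: you only know $|p(\theta,x,y)-1|\le|\theta-\theta_0|\,\omega_\epsilon(x,y)$, which is vacuous exactly where $\omega_\epsilon$ is large. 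So $w_\theta$ does not dominate $w_{\theta_0}$, and you give no other route to $\sup_\theta\pi(\theta)\kappa(w_{\theta_0})<\infty$. Two of your steps rest on that unproved bound: the Lipschitz bound $|\pi(\theta)\phi-\pi(\theta_0)\phi|\le c\,|\theta-\theta_0|\,\pi(\theta)w_{\theta_0}$, and the uniform integrability of $h_\theta$ and $h$ under $\{\pi(\theta)\}$. In your decomposition the moving measure $\pi(\theta)$ must integrate a weight built from $P(\theta_0)$, and the displayed hypothesis says nothing about that.

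\textbf{Two ways to repair it.} First, suppose the $v_1$-condition is read with $P(\theta_0,x,dy)$ inside $\kappa$. This is the natural form, since $\omega_\epsilon$ bounds derivatives of densities taken relative to $P(\theta_0)$. Then your argument goes through. The only change is that $\pi(\theta)|g|<\infty$ must come from $p(\theta,x,y)\le 1+\epsilon\,\omega_\epsilon(x,y)$, which gives $\pi(\theta)v_0=\pi(\theta)P(\theta)v_0\le(1+\epsilon)\,\pi(\theta)w_{\theta_0}$.

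Second, under the displayed form, reverse the roles. Integrating the Poisson equation for $P(\theta)$ against $\pi(\theta_0)$ gives $\gamma(\theta)-\gamma(\theta_0)=\pi(\theta_0)\bigl(P(\theta)-P(\theta_0)\bigr)g_\theta$. Now the measure is fixed. The dominating function $c\,\omega_\epsilon(x,y)(v_0(y)+1)$ is integrable under $\pi(\theta_0)(dx)P(\theta_0,x,dy)$ by the $v_1$-condition at $\theta=\theta_0$ alone. Your continuity of $\pi(\theta)$ is replaced by two ingredients:
\begin{itemize}
\item a bound $|g_\theta|\le c(v_0+1)$ uniform in $\theta$, which follows from the uniform $v_0$-drift and the uniform minorization;
\item pointwise convergence $g_\theta\to g_{\theta_0}$ under a common regenerative normalization.
\end{itemize}
For $n=1$, the resolvent identity for the killed kernels $P(\theta,x,dy)-\beta\,\mathbb{I}(x\in A)\varphi(dy)$ reduces the second ingredient to finiteness of $\mathbb{E}^{\theta_0}_y\sum_{j<T}w_{\theta_0}(X_j)$, with $T$ the regeneration time. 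The $v_1$-drift at $\theta_0$ supplies that finiteness.

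\textbf{Minor point.} Your closing uniqueness remark holds only within a class such as $\{|g|\le c(v_0+1)\}$ or $\pi(\theta_0)$-integrable solutions. That is how ``a solution'' in the statement has to be read.
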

Refer to Section 4 in \cite{rhee2017lyapunov} for more details and intuitions.
Essentially, equation \eqref{eq: gp0} expresses the gradient in \eqref{eq: gamma-prime} as the stationary expectation of a suitably defined function with respect to the invariant measure $\pi(\theta_0)$.
It is noteworthy that Assumption \ref{assumption: drift}-\ref{assumption-coupling} are not explicitly required in Proposition \ref{proposition-differentiability}; however, the geometric ergodicity implied by these assumptions becomes instrumental for subsequent efficiency analyses.
Here we focus on estimating the derivative based on the probabilistic representation \eqref{eq: gp0}.
First we consider $g_z$ in place of $g$ in \eqref{eq: gp0} and define 
\begin{equation}\label{eq: G_z}
    \newnota{G-z}{G_z(x)} := \sum_{t=0}^{\tau_{x,z}}\left(f(X_t) - f(Z_t)\right)
\end{equation}
as an unbiased estimator for $g_z(x)$.
Observe that by conditioning on $X_0$, we may rewrite \eqref{eq: Poisson} to
\begin{equation}
    \gamma'(\theta_0) = \mathbb{E}^{\theta_0}_{\pi(\theta_0)}\left[p'(\theta_0, X_0, X_1)g(X_1)\right] = \mathbb{E}^{\theta_0}_{\pi(\theta_0)}\left[h(X_0)\right],
\end{equation}
where
\begin{equation}
    \newnota{h-x}{h(x)} := \mathbb{E}_{x}^{\theta_0}\left[p'(\theta_0, x, X_1)g(X_1)\right].
\end{equation}
Now we define
\begin{equation}
    \newnota{H-x0x1}{H(x_0, x_1)} = p'(\theta_0, x_0, x_1)G_z(x_1),
\end{equation}
where $G_z(x_1)$ is independent given $x_1$ and we make the dependence on choice of state $z$ implicit in $H(x_0, x_1)$ and whenever obvious later.
Building upon the unbiased stationary mean estimator introduced in \eqref{eq: H-kL}, we propose our initial derivative estimator, defined explicitly as follows:
\begin{equation}\label{eq: H-kL-1}
    \newnota{Hk-1}{H_{1}^{k,L}(X, Y)} := H(X_k, X_{k+1})  + \sum_{t=1}^{\floor*{\frac{\tau^L-k}{L}}} \left(H(X_{k + tL}, X_{k + tL + 1}) - H(Y_{k+(t-1)L}, Y_{k+(t-1)L + 1})\right).
\end{equation}
Under suitable technical conditions (to be specified in subsequent sections), $H_{1}^{k,L}(X, Y)$ can be rigorously shown to be an unbiased estimator for $\gamma'(\theta_0)$ with finite second moment and finite computation time.
The formal proof will be delayed to later this section where we prove the result for a more generalized version of the derivative estimator.
A fundamental difference between $H_{1}^{k,L}(X, Y)$ and previously studied stationary mean estimators is the non-trivial computational complexity involved in evaluating $H(x_0, x_1)$, which necessitates simulating a coupled Markov chain until coalescence.
Furthermore, for fixed inputs $(x_0, x_1)$, the quantity $H(x_0, x_1)$ itself is inherently random, thereby potentially introducing substantial variance, particularly when coupling times are prolonged due to slow mixing.
Therefore, to make a more practical estimator, we need machinery to improve efficiency for each of the $H(x_0, x_1)$ terms in \eqref{eq: H-kL-1}.

\subsection{Derivative estimator with the $L$-skeleton chain approach}
We note that $\gamma(\theta_0)$ stays the same when we consider the $L$-skeleton chain and so should its derivative $\gamma'(\theta_0)$.
We first present an intuitive derivation of the derivative representation for the $L$-skeleton chain,  subsequently formalizing it in a rigorously stated theorem.
First see that for the $L$-step transition kernel $\newnota{P-L}{P^L(\theta)}:= \left(P(\theta, x, dy): x, y \in \mathcal{X}\right)$,
\begin{align}
    P^L(\theta, x_0, dx_L) &= \int_{\mathcal{X}}P^{L-1}(\theta, x_0, dx_{L-1})P(\theta, x_{L-1}, dx_{L})dx_{L-1}\\
    &= \int_{\mathcal{X}} \left(\int_{\mathcal{X}} P^{L-2}(\theta, x_0, dx_{L-2})P(\theta, x_{L-2}, dx_{L-1})dx_{L-2}\right) P(\theta, x_{L-1}, dx_{L})dx_{L-1}\\
    &= \int_{\mathcal{X}^{L-1}} \prod_{i=0}^{L-1} P(\theta, x_i, dx_{i+1}) dx_1 dx_2 \cdots dx_{L-1}. 
\end{align}
Then, assume the validity for interchanging derivatives and integrals, we may rewrite \eqref{eq: gp0} as 
\begin{align}
    \gamma'(\theta_0) &= \mathbb{E}^{\theta_0}_{\pi(\theta_0)}\left[\frac{\frac{d}{d\theta}P^{L}(\theta, X_0, dX_L)|_{\theta_0}}{P^L(\theta_0, X_0, dX_L)}g^L(X_{L}) \right]\\
     &= \mathbb{E}_{X_0\sim\pi(\theta_0)}\left[\mathbb{E}\left[\frac{\frac{d}{d\theta}P^{L}(\theta, X_0, dX_L)|_{\theta_0}}{P^L(\theta_0, X_0, dX_L)}g^L(X_{L})|X_0\right] \right]\\
    &= \mathbb{E}_{X_0\sim\pi(\theta_0)}\left[\int \frac{d}{d\theta}P^{L}(\theta, X_0, dX_L)|_{\theta_0}g^L(X_L)\right]\\
    &=\mathbb{E}^{\theta_0}_{\pi(\theta_0)}\left[\left(\sum_{i=0}^{L-1}p'(\theta_0, X_{i}, X_{i+1})\right)g^L(X_L)\right],
\end{align}
where the first equality is \eqref{eq: gp0} with the $L$-step kernel $P^L(\theta)$ and for now is assumed to hold without further assumptions; the last equality comes from applying chain rule and divide and multiply $\prod_{i=0}^{L-1} P(\theta_0, x_i, dx_{i+1})$, and $\newnota{g-L}{g^L}$ is a solution to the Poisson's equation associated with the $L$-skeleton Markov chain that satisfies the following equation
\begin{equation}\label{eq: Poisson-2}
    g^{L} - P^{L}(\theta)g^{L} = f - \pi(\theta)f.
\end{equation}
We next rigorously demonstrate that the $L$-skeleton representation of the derivative holds without imposing additional assumptions beyond those stated in Proposition \ref{proposition-differentiability}.
This approach circumvents the typically intricate verification of conditions directly on the $L$-step transition kernel.
\begin{theorem}\label{theorem: L-representation}
    Assume that \eqref{eq: gp0} holds, then
    \begin{equation}\label{eq: gp1}
        \gamma'(\theta_0) = \mathbb{E}^{\theta_0}_{\pi(\theta_0)}\left[\left(\sum_{i=0}^{L-1}p'(\theta_0, X_{i}, X_{i+1})\right)g^L(X_L)\right],
    \end{equation}
    for all $L \geq 1$.
\end{theorem}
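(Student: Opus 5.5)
Starting from \eqref{eq: gp0}, the plan is to express $g$ through $g^L$ and then use stationarity to shift time indices. First, we note that a solution $g^L$ of \eqref{eq: Poisson-2} yields a solution of \eqref{eq: Poisson} via
\begin{equation*}
    \tilde g := \sum_{j=0}^{L-1} P^j(\theta_0) g^L .
\end{equation*}
Indeed, $(I-P(\theta_0))\tilde g = (I - P^L(\theta_0)) g^L = f - \pi(\theta_0)f$. The solution $g$ in \eqref{eq: gp0} is unique up to an additive constant in the relevant function class, and the constant does not matter. This is because $\mathbb{E}^{\theta_0}_{x}[p'(\theta_0,x,X_1)] = \frac{d}{d\theta}\int P(\theta,x,dy)\big|_{\theta_0} = 0$, so
\begin{equation*}
    \mathbb{E}^{\theta_0}_{\pi(\theta_0)}[p'(\theta_0,X_0,X_1)c]=0 .
\end{equation*}
We may therefore replace $g$ by $\tilde g$ in \eqref{eq: gp0}. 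Should uniqueness be delicate, we would instead restrict to the solution class in which Proposition \ref{proposition-differentiability} is stated (e.g. $g_z$, or the fundamental solution), and check that $\tilde g$ lies in it when $g^L$ is chosen as the corresponding $L$-skeleton fundamental solution. This works because $\sum_{j<L}P^j g^{L,fu}=g^{fu}$ follows by regrouping the series $\sum_t (P^t f - \pi f)$ into blocks of length $L$; absolute convergence is justified by the geometric ergodicity \eqref{eq: remark}.

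Next, we expand
\begin{equation*}
    \gamma'(\theta_0) = \sum_{j=0}^{L-1}\mathbb{E}^{\theta_0}_{\pi(\theta_0)}\big[p'(\theta_0,X_0,X_1)(P^j g^L)(X_1)\big].
\end{equation*}
By the Markov property, $(P^jg^L)(X_1) = \mathbb{E}[g^L(X_{1+j})\mid X_0,X_1]$, so the $j$-th term equals $\mathbb{E}_{\pi}[p'(\theta_0,X_0,X_1) g^L(X_{j+1})]$. Stationarity of $\pi(\theta_0)$ then lets us shift time by $L-1-j$, which turns this into $\mathbb{E}_{\pi}[p'(\theta_0,X_{i},X_{i+1}) g^L(X_L)]$ with $i=L-1-j$. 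As $j$ ranges over $0,\dots,L-1$, $i$ ranges over the same set, and summing gives \eqref{eq: gp1}.

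The main obstacle is integrability. Conditioning and Fubini require $\mathbb{E}_\pi|p'(\theta_0,X_i,X_{i+1})\, g^L(X_L)|<\infty$, and we need the identification of $\tilde g$ with the solution for which \eqref{eq: gp0} is asserted. We would handle this using the bound $|p'|\le\omega_\epsilon$ together with the drift conditions of Proposition \ref{proposition-differentiability}. The $v_1$ inequality gives $\pi\big(\int \omega_\epsilon(x,y)(v_0(y)+1)P(x,dy)\big)<\infty$. The $v_0$ inequality gives $|g|\lesssim v_0+1$, and similarly $|P^jg^L|$ is controlled by $v_0+1$, since $g^L$ can be taken as $g-Pg$-type combinations of $g$ (e.g. $g^L = g + \text{const}$ adjustments via $\sum_j P^j$ inversion). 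Once absolute integrability holds, all interchanges above are justified.
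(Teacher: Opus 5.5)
Your proof is correct and is essentially the paper's argument: both rest on the identity $\sum_{j=0}^{L-1}P^j(\theta_0)g^{fu,L}=g^{fu}$ combined with the Markov property and time shifts under stationarity. The paper merely packages this as $L\gamma'(\theta_0)$ plus a remainder it shows to vanish, while your direct substitution of $\tilde g$ is slightly cleaner. The paper never addresses integrability, so your last paragraph goes beyond it, but $g^L$ is not a finite ``$g-Pg$-type'' combination of $g$ (for $L=2$ one only gets $(I+P(\theta_0))g^{L}=g+c$); that bound is better obtained directly, e.g.\ $|P^j(\theta_0)g^{fu,L}|\le\sum_{k\ge 0}|P^{kL+j}(\theta_0)\bar f|\le cRV/(1-r^L)$ from \eqref{eq: remark} when $|f|\le cV$.
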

The proof is deferred to Section \ref{sec-proof-thm-L-representation}.
Naturally, here for a pre-specified state $z \in \mathcal{X}$, we develop the $L$-skeleton chain counterpart for \eqref{eq: G_z} as:
\begin{equation}
    \newnota{G-z-L}{G^{L}_z(x)}:=\sum_{i=0}^{\floor*{\frac{\tau_{x, z}}{L}}}\left(f(X_{iL}) - f(Z_{iL})\right),
\end{equation}
and for the same reason as before, this is an unbiased estimator for
\begin{equation}
    \newnota{g-z-L}{g^{L}_z(x)} := \mathbb{E}_{x, z}^{\theta_0}\left[\sum_{i=0}^\infty \left(f(X_{iL}) - f(Z_{iL})\right)\right],
\end{equation}
where $g^{L}_z$ is a solution to \eqref{eq: Poisson-2}. 
Analogously to the development in Section \ref{subsec-sma}, we first define the following intermediate quantities:
\begin{equation}\label{eq: H2}
    \newnota{H-0-L}{H(\{x_i\}_{i=0}^{L})}: = \left(\sum_{i=0}^{L-1}p'(\theta_0, x_{i}, x_{i+1})\right)G_z^L(x_L),
\end{equation}
and then we can define our second derivative estimator to be
\begin{equation}\label{eq: H-kL-2}
    \newnota{Hk-2}{H_{2}^{k,L}(X, Y)} := SE_{k}^{L} + BC_{k}^{L}
\end{equation}
where 
\begin{align}
    \newnota{SE-kL}{SE_{k}^{L}} &:= H\left(\{X_i\}_{i=k}^{k+L}\right);\\
    \newnota{BC-kL}{BC_{k}^{L}} &:= \sum_{t=1}^{\floor*{\frac{\tau^L-k}{L}}} H\left(\{X_i\}_{i=k+tL}^{k+(t+1)L}\right) - H\left(\{Y_i\}_{i=k+(t-1)L}^{k+tL}\right).
\end{align}
represents the singleton estimator (SE) and and bias correction (BC) term to the singleton estimator respectively.
The motivation to develop this estimator from the $L$-skeleton chain is the greatly reduced variance when considering $G^{L}_z$ instead of $G_z$ since $G^{L}_z$ is a sum of much fewer random variables when $L$ is large.
The price we pay for the reduced variance in $G^{L}_z$ is the increased variance in the $\sum_{i=0}^{L-1}p'(\theta_0, x_{i}, x_{i+1})$ term in \eqref{eq: H2} as $L$ gets larger.
However, under appropriate conditions, we can show that $\{M_n\}_{n=0,1,2}$ with $M_0 = 0$ and 
\begin{equation}\label{eq: Mn}
    \newnota{M-L}{M_n} := \sum_{i=0}^{L-1}p'(\theta_0, x_{i}, x_{i+1})
\end{equation}
for $n \geq 1$ is a martingale.
Thus the variance increase for the martingale term usually gets outweighed by the sometimes dramatic variance decrease in the estimation of the solution to the Poisson's equation associated with the $L$-skeleton chain and causing a great improvement in efficiency.
Nevertheless, it is unwise to choose $L$ to be arbitrarily large as the estimator's efficiency does not monotonically improve as $L$ gets larger.
The analysis of the efficiency and a guideline for choosing $L$ will be discussed in the next section.
Here we first prove our result rigorously.

\begin{assumption}\label{assumption-moment}
    Suppose there exists a $\newnota{p}{p} > 1$ , $\newnota{kappa}{\kappa} > 1$ and a $\newnota{delta}{\delta} > 0$ such that $\left|f^{2\kappa p+\delta}\right|_{V},  \left|\Omega^{\frac{2\kappa p}{p-1}}\right|_{V} < \infty$, where
    \begin{align}
        \newnota{Omega}{\Omega^{\frac{2\zeta p}{p-1}}(x)} &= \int_{\mathcal{X}}\omega_{\epsilon}(x, y)^{\frac{2\zeta p}{p-1}}P(\theta_0, x, dy),\\
        \newnota{f2p}{f^{2\zeta p+\delta}(x)} &= f(x)^{2\zeta p+\delta}.
    \end{align}
\end{assumption}

\begin{lemma}\label{lemma: Gamma}
    With assumptions \ref{assumption: r-g-A4}-\ref{assumption-moment}, if we define for $1 \leq \zeta \leq \kappa$:
    \begin{equation}
        \newnota{Gamma-x}{\Gamma^{L}_{\zeta}(x)} := \mathbb{E}_x^{\theta_0}\left[H\left(\{X_i\}_{i=0}^{L}\right)^{2\zeta}\right],
    \end{equation}
    then we have $\Gamma^{L}_{\zeta}(x) \leq V(x)U_{\zeta, z}(L)$ where
    \begin{equation}
        \newnota{U-zeta}{U_{\zeta, z}(L)} := \left(L^{\zeta}A_{z} + L^{\zeta}\left(\frac{\left(\rho^{\frac{\delta}{2\zeta p(2\zeta p+\delta)}}\right)^L}{1-\left(\rho^{\frac{\delta}{2\zeta p(2\zeta p+\delta)}}\right)^L}\right)^{2\zeta}B_{z}\right),
    \end{equation}
    and
    \begin{align}
        \newnota{A_z}{A_z^{\zeta}} &:= 2^{4\zeta - 2}\left|f^{2\zeta p+\delta}\right|_{V}^{\frac{2\zeta}{2\zeta p+\delta}}\left(C_{\frac{2\zeta p}{p-1}}|\Omega^{\frac{2\zeta p}{p-1}}|_{V} \left(1 + \frac{b}{1-\lambda}\right)\right)^{\frac{p-1}{ p}}\left(1 +\left(\frac{b}{1-\lambda}\right)^{\frac{2\zeta}{2\zeta p+\delta}}+\Bar{V}(z)^{\frac{2\zeta}{2\zeta p+\delta}}\right),\\
        \newnota{B_z}{B_z^{\zeta}} &:= A_z^{\zeta} \cdot M^{\frac{2\zeta\delta}{2\zeta p(2\zeta p+\delta)}}\left(1 + \left(\frac{b}{1-\lambda}\right)^{\frac{2\zeta\delta}{2\zeta p(2\zeta p+\delta)}}+V(z)^{\frac{2\zeta\delta}{2\zeta p(2\zeta p+\delta)}}\right).
    \end{align}
    where $\newnota{C-kappa}{C_{l}} := \left[8(l-1) \max(1, 2^{l-3})\right]^{l}$. One direct implication is that $\left|\Gamma^{L}_{\zeta}\right|_{V} \leq U_{\zeta, z}(L) < \infty$.
\end{lemma}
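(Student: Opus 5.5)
The plan is to split $H(\{X_i\}_{i=0}^L)=M_L\cdot G_z^L(X_L)$, with $M_L:=\sum_{i=0}^{L-1}p'(\theta_0,X_i,X_{i+1})$, by H\"older's inequality with conjugate exponents $p$ and $p/(p-1)$:
\begin{equation*}
\Gamma^L_\zeta(x)\le \left(\mathbb{E}_x\left[|M_L|^{\frac{2\zeta p}{p-1}}\right]\right)^{\frac{p-1}{p}}\left(\mathbb{E}_x\left[|G_z^L(X_L)|^{2\zeta p}\right]\right)^{\frac1p}.
\end{equation*}
The two factors are then bounded separately, and the resulting powers of $V$ are recombined into a single factor $V(x)$. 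For this last step I would use $V\ge1$ together with the concavity bound $\mathbb{E}[V^{a}]\le (\mathbb{E}V)^{a}\le \Bar V^{a}$ for $a\le 1$, which follows from \eqref{eq: V-bar}. This explains the shape of $A_z^\zeta$: one factor involves $|\Omega|_V$ and $C_{\frac{2\zeta p}{p-1}}$, the other involves $|f^{2\zeta p+\delta}|_V$, and the terms $1+b/(1-\lambda)$ and $\Bar V(z)$ come from evaluating $V$ at time $L$.

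\textbf{Martingale factor.} Under $P(\theta_0)$ we have $\int p'(\theta_0,x,y)P(\theta_0,x,dy)=0$, since $\int p(\theta,x,y)P(\theta_0,x,dy)=1$ and differentiation under the integral is justified by $\omega_\epsilon$. Hence $M_n$ is a martingale. I would apply Burkholder's inequality with $l=\frac{2\zeta p}{p-1}\ge2$, whose constant I expect to be the stated $C_l=[8(l-1)\max(1,2^{l-3})]^l$. This gives $\mathbb{E}|M_L|^l\le C_l\,\mathbb{E}\big(\sum_i D_i^2\big)^{l/2}$. Jensen's inequality then bounds this by $C_l L^{l/2-1}\sum_i\mathbb{E}|D_i|^l$. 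Conditioning on $X_i$ gives $\mathbb{E}|D_i|^l\le \mathbb{E}_x\Omega^{l}(X_i)\le|\Omega^l|_V\,\Bar V(x)$. Raising to the power $(p-1)/p$ produces the factor $L^{\zeta}$ together with the $\Omega$-constant in $A_z^\zeta$.

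\textbf{Poisson factor.} Write $G_z^L(y)=\sum_{i\ge0}(f(X_{iL})-f(Z_{iL}))\mathbbm 1(\tau_{y,z}\ge iL)$. I would use Minkowski's inequality in $L^{2\zeta p}$ and treat the $i=0$ term separately, since it contributes $|f(y)|+|f(z)|$ and hence the $A$-type part. For each $i\ge1$, H\"older's inequality with exponents $\frac{2\zeta p+\delta}{2\zeta p}$ and $\frac{2\zeta p+\delta}{\delta}$ gives
\begin{equation*}
\|(f(X_{iL})-f(Z_{iL}))\mathbbm 1(\tau>iL-1)\|_{2\zeta p}\le \|f(X_{iL})-f(Z_{iL})\|_{2\zeta p+\delta}\,P(\tau>iL-1)^{\frac{\delta}{2\zeta p(2\zeta p+\delta)}}.
\end{equation*}
The first norm is bounded through $|f^{2\zeta p+\delta}|_V$ and $\Bar V$. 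The probability is bounded by Assumption \ref{assumption-coupling}, which gives $P(\tau>t)\le M(V(y)+V(z))\rho^{t}$. Summing the geometric series over $i\ge1$ yields the factor $\frac{r_0^L}{1-r_0^L}$ with $r_0=\rho^{\delta/(2\zeta p(2\zeta p+\delta))}$. Raising to the power $2\zeta$, using $(a+b)^{2\zeta}\le 2^{2\zeta-1}(a^{2\zeta}+b^{2\zeta})$, and then evaluating at $y=X_L$ while averaging over $X_L$ via \eqref{eq: V-bar} produces the $B_z^\zeta$ term with its $M$-power and its $V(z)$-power.

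\textbf{Main obstacle.} The delicate part is the exponent bookkeeping needed so that every power of $V(x)$ is at most $1$ and the total collapses to the single factor $V(x)$ (using $V\ge1$). The constants must also match $2^{4\zeta-2}$ and $\left(\cdot\right)^{(p-1)/p}$ exactly. I would also check that $L$-dependence enters only through $L^\zeta$ from Burkholder and through the geometric factor. Finiteness of $U_{\zeta,z}(L)$ is immediate because $\rho<1$.
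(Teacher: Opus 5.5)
Your overall architecture is the paper's. It has the H\"older split with exponents $p$ and $\frac{p}{p-1}$, and the martingale property of $M_n$ via dominated convergence with $\omega_\epsilon$. It uses a moment bound $\mathbb{E}_x|M_L|^{l}\le C_l L^{l/2-1}\sum_{i}\mathbb{E}_x|D_i|^{l}$ with $l=\frac{2\zeta p}{p-1}$. It then applies Minkowski, H\"older with exponents $\frac{2\zeta p+\delta}{2\zeta p}$ and $\frac{2\zeta p+\delta}{\delta}$, the coupling tail, a geometric series, and the identity $\frac{2\zeta}{2\zeta p+\delta}+\frac{2\zeta\delta}{2\zeta p(2\zeta p+\delta)}=\frac{1}{p}$ that collapses the powers of $V$ to $V(x)$. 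The constant $C_l$ is not a Burkholder constant; it is the one in Theorem 1 of \cite{dharmadhikari1968bounds}, which gives that moment bound directly. Your Burkholder-plus-Jensen route gives the same bound with a constant such as $(l-1)^l\le C_l$, so that part is fine.

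The genuine problem is where you integrate over $X_L$ in the Poisson factor. As written, you bound $\|G_z^L(y)\|_{2\zeta p}$ for fixed $y$, raise it to the power $2\zeta$, and only then average over $y=X_L$ using concavity of $v\mapsto v^a$. Set $\psi(y):=\mathbb{E}[|G_z^L(y)|^{2\zeta p}]$. Your procedure controls $\mathbb{E}_x[\psi(X_L)^{1/p}]$, but the H\"older split needs a bound on $(\mathbb{E}_x[\psi(X_L)])^{1/p}$, which is the larger quantity by Jensen, so the inequality runs the wrong way.

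The paper avoids this by applying Minkowski and H\"older directly in $L^{2\zeta p}(P_x)$ with $X_L$ random from the start, so the averaging over $X_L$ happens inside each moment and each probability. Concretely, $\mathbb{E}_x[|f(X_{(t+1)L})|^{2\zeta p+\delta}]\le|f^{2\zeta p+\delta}|_V\Bar{V}(x)$ by \eqref{eq: V-bar} with $n=(t+1)L$. Likewise $P_x(\tau_{X_L,z}>tL)=\int P(\tau_{y,z}>tL)P^L(\theta_0,x,dy)\le M(\Bar{V}(x)+V(z))\rho^{tL}$. Only after that is the power $2\zeta$ taken, followed by $\Bar{V}(x)^a\le V(x)^a(1+(\frac{b}{1-\lambda})^a)$ for $a\le 1$ and $V\ge 1$.

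If you instead repair your route by averaging $\psi$ itself rather than $\psi^{1/p}$, you still get $V(x)$ times a constant. However, $\frac{b}{1-\lambda}$ then enters twice, once for fixed $y$ and once when averaging $V(X_L)$. You end with $\Bar{V}(x)^{1/p}$ and an extra factor such as $(1+\frac{b}{1-\lambda})^{1/p}$, so $A_z^\zeta$ and $B_z^\zeta$ are not reproduced as stated.

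There is also a smaller indexing slip: use $\mathbbm{1}(\tau>iL)$ rather than $\mathbbm{1}(\tau>iL-1)$, since the $i$-th term vanishes on $\{\tau=iL\}$. With $iL-1$ the geometric sum carries an extra factor $\rho^{-\delta/(2\zeta p(2\zeta p+\delta))}$ that is absent from $B_z^\zeta$.
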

The proof is in Section \ref{sec-proof1}.
Next we prove the formal results for our estimator $H_{2}^{k,L}(X, Y)$.
\begin{theorem}\label{theorem: H2}
    Under assumptions \ref{assumption: r-g-A4}, \ref{assumption: drift}, \ref{assumption-starting}, \ref{assumption-coupling} and \ref{assumption-moment}, $H_{2}^{k,L}(X, Y)$ is an unbiased estimator for $\gamma'(\theta)$ with finite second moment and finite expected computation time. Furthermore,
    \begin{equation}\label{eq: thm-bc}
        \mathbb{E}_{\mu_0}^{\theta_0}\left[\left(BC_{k}^L\right)^2\right] \rightarrow 0
    \end{equation}
    and
    \begin{equation}\label{eq: thm-se}
        \left|\mathbb{E}_{\mu_0}^{\theta_0}\left[\left(SE_{k}^L\right)^2\right] - \pi(\theta_0)\Gamma^{L}_{1}\right| \rightarrow 0
    \end{equation}
    as $k \rightarrow \infty$.
\end{theorem}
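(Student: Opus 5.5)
The plan is to treat $H_2^{k,L}$ as the debiased estimator built on the $L$-skeleton representation. Unbiasedness and the moment bounds will follow the standard argument for $H_k^L$ in \cite{glynn2014exact,jacob2020unbiased}, with $f(X_t)$ replaced by the random block functional $H(\{X_i\}_{i=t}^{t+L})$, whose conditional moments are controlled by Lemma \ref{lemma: Gamma}.

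\textbf{Step 1: the target.} Theorem \ref{theorem: L-representation} gives $\gamma'(\theta_0)=\pi(\theta_0)\eta$, where
\[
\eta(x):=\mathbb{E}_x^{\theta_0}\bigl[H(\{X_i\}_{i=0}^L)\bigr].
\]
Here $G_z^L$ is unbiased for $g_z^L$ conditionally on $X_L$, and it is generated independently. By Jensen and Lemma \ref{lemma: Gamma}, $|\eta(x)|\le \Gamma_1^L(x)^{1/2}\le U_{1,z}(L)^{1/2}\sqrt{V(x)}$. Since $\sqrt V$ is a Lyapunov function, \eqref{eq: remark2} gives the geometric bound
\[
\bigl|\mathbb{E}_{\mu_0}[\eta(X_n)]-\gamma'(\theta_0)\bigr|\le C\,\mu_0\sqrt V\, r^n .
\]
The same holds for $\mathbb{E}_{\mu_0}[\Gamma^L_1(X_n)]$ by \eqref{eq: remark}, because $\Gamma_1^L\le U V$.

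\textbf{Step 2: unbiasedness via dominated convergence.} Write $\Delta_t := H(\{X_i\}_{i=k+tL}^{k+(t+1)L}) - H(\{Y_i\}_{i=k+(t-1)L}^{k+tL})$. Then $BC_k^L=\sum_{t\ge1}\Delta_t\mathbbm 1(\tau^L> k+tL)$, because for larger $t$ the two blocks coincide under the coupling and can use the same $G_z^L$ draw. Marginally both chains follow $P(\theta_0)$, so the partial sums have expectation $\mathbb{E}[\eta(X_{k+nL})]$, which tends to $\gamma'(\theta_0)$ by Step 1.

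To pass the limit through the sum, I will show $\sum_t \mathbb{E}|\Delta_t|\mathbbm 1(\tau^L>k+tL)<\infty$. By Hölder with exponents $(\zeta'/(\zeta'-1),\zeta')$ for some $\zeta'\in(1,\kappa]$ (admissible since $\kappa>1$),
\[
\mathbb{E}|\Delta_t|\mathbbm 1(\cdot)\le \bigl(\mathbb{E}|\Delta_t|^{2\zeta'}\bigr)^{1/2\zeta'}\, P(\tau^L>k+tL)^{1-1/2\zeta'} .
\]
The first factor is bounded by conditioning on the block start and applying Lemma \ref{lemma: Gamma} together with \eqref{eq: V-bar}, giving at most $(U_{\zeta',z}\,\mu_0\bar V)^{1/2\zeta'}$. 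For the second factor, apply Assumption \ref{assumption-coupling} conditionally on $(X_L,Y_0)$ and use \eqref{eq: V-bar} and Assumption \ref{assumption-starting}; this gives $P(\tau^L>k+tL)\le 2M\mu_0\bar V\rho^{k+tL-L}$. The tail is therefore geometric and summable.

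\textbf{Step 3: second moment and limits.} The same Hölder step with the Minkowski inequality controls the second moment:
\[
\|BC_k^L\|_2\le\sum_t \bigl(\mathbb{E}|\Delta_t|^{2\zeta'}\bigr)^{1/2\zeta'} P(\tau^L>k+tL)^{\frac12-\frac1{2\zeta'}} .
\]
Each term carries the factor $\rho^{(k+tL-L)(\zeta'-1)/2\zeta'}$, so the sum is finite and of order $O(\rho^{ck})$. This gives \eqref{eq: thm-bc}.

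For the singleton term, $\mathbb{E}(SE_k^L)^2=\mathbb{E}_{\mu_0}[\Gamma_1^L(X_k)]\to\pi(\theta_0)\Gamma_1^L$ by the geometric ergodicity bound \eqref{eq: remark}, applied to $\Gamma_1^L/U_{1,z}(L)\le V$. This gives \eqref{eq: thm-se}. Finiteness of $\mathbb{E}(H_2^{k,L})^2$ then follows from $(a+b)^2\le 2a^2+2b^2$.

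\textbf{Step 4: computation time.} The cost is $k+\tau^L$ plus the sum, over the at most $\lfloor(\tau^L-k)/L\rfloor+2$ blocks, of the inner coupling times $\tau_{X_{\cdot},z}$. Each inner time has conditional expectation at most $C(V(x)+V(z))$ by \eqref{eq: A-tail}. A Wald-type bound, or Cauchy–Schwarz using the geometric tails of $\tau^L$ and $\mathbb{E}\,V(X_n)\le\mu_0\bar V$, then gives finite expected work.

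\textbf{Main obstacle.} The delicate step is Step 2/3. The integrand is not a fixed function but involves the independent randomization in $G_z^L$ and the non-Markov block dependence. The Hölder split must use exponents for which Lemma \ref{lemma: Gamma} applies, hence the need for $\kappa>1$. I also need to check that the coupling indicator is measurable with respect to the chain up to the block start, or else bound it by the indicator at the earlier time, so that conditioning on the block start is legitimate.
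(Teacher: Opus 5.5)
Your proposal follows essentially the same route as the paper's proof. It uses a Hölder split of the non-coupling indicator against the $2\kappa$-moment bound of Lemma~\ref{lemma: Gamma} (conditioning on the block start and using \eqref{eq: V-bar}), the geometric tail of $\tau^L$ from Assumption~\ref{assumption-coupling}, Minkowski for the second moment and for $BC_k^L\to 0$, geometric ergodicity for $SE_k^L$, and telescoping for unbiasedness; your explicit dominated-convergence step, with the limit identified through Theorem~\ref{theorem: L-representation}, is a cleaner version of the paper's appeal to Fubini.

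The only soft spot is Step 4, which the paper merely asserts. With an inner-coupling cost bound linear in $V$, your Cauchy--Schwarz route would need $\mathbb{E}\,V(X_n)^2$, which is not assumed, and the Wald-type bound still needs $\sum_n \mathbb{E}[V(X_n);\tau^L\ge n]<\infty$. Two fixes work:
\begin{itemize}
\item Use the sharper bound $\mathbb{E}\,\tau_{x,z}\le C\bigl(1+\log(V(x)+V(z))\bigr)$, obtained by summing $\min\{1,M(V(x)+V(z))\rho^s\}$ over $s$, and pair the indicator with $\sqrt{V}$.
\item Iterate the drift on the predictable event $\{\tau^L\ge n\}$, which gives $\mathbb{E}[V(X_n);\tau^L\ge n]\le\lambda\,\mathbb{E}[V(X_{n-1});\tau^L\ge n-1]+b\,P(\tau^L\ge n)$. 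This is geometrically summable.
\end{itemize}
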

The proof of the Theorem is in Section \ref{sec-proof2}.
Theorem \ref{theorem: H2} makes rigorous some obvious intuitions.
The second moment (and thus the first moment) goes to 0 as the starting point $k$ increases since $X_k$ becomes closer and closer to being distributed as $\pi(\theta_0)$.
Thus we may choose $k$ to be large such that the $SE_k^L$ term dominates and the variance of the estimator converges to the variance of the singleton estimator if we were to start the chain from stationary $\pi(\theta_0)$.
On the other hand, even though choosing a large burn-in period $k$ may reduce variance, it is not economical to just generate a one sample estimator and discard all previously simulated chains.
Therefore, a running average version of the estimator may be of greater interest.

\subsection{Averaging the $L$-skeleton chain estimator}
Several prior studies on stationary expectation estimation advocate for running-average versions of their estimators (e.g., \cite{jacob2020unbiased}, \cite{douc2022solving}, \cite{atchade2024unbiased}).
Nevertheless, in our current context, the computational overhead associated with evaluating the functional of interest is substantial, warranting a modified approach.
To be specific, we need to independently run two chains until they couple every time we are computing $G_z$.
Assume we define a running average version similar to \eqref{eq: H-kmL}, we will get for some $m \geq k$,
\begin{equation}
    \newnota{H-3}{H_3^{k, m, L}(X, Y)} := \frac{1}{m-k+1}\sum_{t=k}^{m}H_2^{t, L}(X, Y).
\end{equation}
A notable issue arising with this running-average approach is the potentially high correlation between successive estimators, namely $H_2^{t,L}(X,Y)$ and $H_2^{t+1,L}(X,Y)$.
Assume that we take $k$ to be a large quantile of the coupling time $\tau^L - L$ so that the bias correction term plays little role \cite{jacob2020unbiased}, then notice that
\begin{align}
    H_2^{t, L}(X, Y) &\approx H\left(\{X_i\}_{i=t}^{t+L}\right) = \left(\sum_{i=t}^{t + L - 1}p'(\theta_0, X_{i}, X_{i+1})\right)G_z^L(X_{t+L});\\
    H_2^{t+1, L}(X, Y) &\approx H\left(\{X_i\}_{i=t+1}^{t + 1 + L}\right) = \left(\sum_{i=t+1}^{t + L}p'(\theta_0, X_{i}, X_{i+1})\right)G_z^L(X_{t+L+1}),
\end{align}
where those two consecutive estimators can be strongly correlated even with moderate choice of $L$ since they share a great number of terms in the summation, specifically $\sum_{i=t + 1}^{t + L - 1}p'(\theta_0, X_{i}, X_{i+1})$ appears in both of the consecutive estimators.
Also, $G_z^L(X_{t+L})$ and $G_z^L(X_{t+L+1})$ can also be correlated since $X_{t+L}$ and $X_{t+L+1}$ are one step away.
Meanwhile, two independent coupled chains needs to be simulated to compute $G_z^L(X_{t+L})$ and $G_z^L(X_{t+L+1})$, causing a significant computational burden.
It is reasonable to question the merit of expending substantial computational effort to generate highly correlated estimates.
Owen (2017) \cite{owen2017statistically} addresses precisely this issue, demonstrating that when function evaluation costs dominate the computational cost of advancing the Markov chain, strategically thinning samples can notably enhance estimation efficiency.
Thus here we propose a more general form of the running average estimator that utilizes the thinning technique in \cite{owen2017statistically}.
For some $m \geq 1$, we define:
\begin{equation}
    \newnota{H-4}{H_4^{k, m, L}(X, Y)} := \frac{1}{m}\sum_{t=0}^{m-1}H_2^{k+tL, L}(X, Y).
\end{equation}
Note that for each of the $t = 1, 2,\dots, m-1$, Theorem \ref{theorem: H2} applies to $H_2^{k+tL, L}(X, Y)$.
Since $H_4^{k, m, L}(X, Y)$ is an average of them, a simple application of Minkowski's inequality would yield that $H_4^{k, m, L}(X, Y)$ also has finite second moment and thus it is unbiased and has computation time.
Similarly, if $k$ is chosen to be a large quantile of $\tau^L - L$, then the average of the singleton estimators is going to dominate and
\begin{equation}\label{eq: H-4-approx}
    H_4^{k, m, L}(X, Y) \approx \frac{1}{m}\sum_{t=0}^{m-1} H\left(\{X_i\}_{i=k+tL}^{k+(t+1)L}\right),
\end{equation}
where we usually take $m$ to be large so that the burn-in cost $k$ becomes less significant.
The asymptotic variance of $H_4^{k, m, L}(X, Y)$ is then approximately $\sigma^2_{L} + 2\sum_{j=1}^{\infty}\sigma_{j, L}$ where 
\begin{equation}
    \begin{aligned}
        \newnota{sig-2}{\sigma^2_{L}} &:= Var_{\pi(\theta_0)}\left(H\left(\{X_i\}_{i=0}^{L}\right)\right),\\
        \newnota{rho-jL}{\sigma_{j, L}} &:= Cov_{\pi(\theta_0)}\left(H\left(\{X_i\}_{i=0}^{L}\right), H\left(\{X_i\}_{i=jL}^{(j+1)L}\right)\right).
    \end{aligned}
\end{equation}
That is,
\begin{equation}\label{eq: H-4-approx-2}
    m \cdot Var(H_4^{k, m, L}(X, Y)) \approx \sigma^2_{L} + 2\sum_{j=1}^{\infty}\sigma_{j, L}.
\end{equation}
Observe that consecutive estimators such as $H({X_i}{i=0}^{L})$ and $H({X_i}{i=L}^{2L})$ do not share overlapping terms from the chain $X$, implying intuitively low correlation provided $L$ is chosen sufficiently large.
A more rigorous quantitative analysis of estimator efficiency will be presented in the subsequent section.

\section{Efficiency Analysis and Parameter Selections \jw{Ready to be reviewed}}\label{sec-EA}
In this section, we conduct a detailed efficiency analysis of our estimator, with inefficiency quantified by the work-variance product.
Leveraging the outcomes of this analysis, we provide explicit guidelines for selecting key parameters, including the lagging parameter LL and the reference state $z$.
Concretely, if we choose $k$ and $m$ as chosen according to recommendations in the previous section such that approximations \eqref{eq: H-4-approx} and \eqref{eq: H-4-approx-2} hold, then the expected number of Markov transitions to produce one $H_4^{k, m, L}(X, Y)$ is approximately $k + m(L + 2\mathbb{E}[\tau_{\pi(\theta_0), z}])$ where $\newnota{tau-pi}{\tau_{\pi(\theta_0), z}}$ is the coupling time of two chains starting from stationary distribution $\pi(\theta_0)$ and a pre-specified state $z$.
For specific breakdowns, $k$ is the burn-in time for chain $X$; $L + 2\mathbb{E}[\tau_{\pi(\theta_0), z}]$ is the cost to advance from $X_{k+tL}$ to $X_{k+(t+1)L}$ plus the expected computational cost to generate independently a pair of coupled chain to estimate the solution to the Poisson's equation $G_z^L$.
The multiplier $m$ represents those steps will be repeated for $m$ times.
Thus, our analysis focuses on the following work-variance product:
\begin{align}
    \left(k + m\left(L + 2\mathbb{E}[\tau_{\pi(\theta_0), z}]\right)\right)Var(H_4^{k, m, L}(X, Y)) &\approx \left(\frac{k}{m} + L + 2\mathbb{E}[\tau_{\pi(\theta_0), z}]\right)\left(\sigma^2_{L} + 2\sum_{j=1}^{\infty}\sigma_{j, L}\right)\\
    &\approx \left(L + 2\mathbb{E}[\tau_{\pi(\theta_0), z}]\right)\left(\sigma^2_{L} + 2\sum_{j=1}^{\infty}\sigma_{j, L}\right)
\end{align}
where the approximation follows from \eqref{eq: H-4-approx-2}. 
\begin{lemma}\label{lemma: efficiency}
    With Assumptions \ref{assumption: r-g-A4}-\ref{assumption-moment}, the asymptotic variance is bounded by
    \begin{equation}
        \left(\sigma^2_{L} + 2\sum_{j=1}^{\infty}\sigma_{j, L}\right) \leq U_{1, z}(L)\pi(\theta_0)V\left(3 + \frac{2M\rho^L}{1-\rho^L}\right).
    \end{equation}
\end{lemma}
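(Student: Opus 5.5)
We bound the variance term and the covariance terms separately, writing $W_j := H(\{X_i\}_{i=jL}^{(j+1)L})$ under the stationary chain started from $\pi(\theta_0)$.

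\textbf{Variance term.} We use $\sigma_L^2 \le \mathbb{E}_{\pi(\theta_0)}[W_0^2] = \pi(\theta_0)\Gamma^L_1$. Lemma \ref{lemma: Gamma} with $\zeta=1$ then gives
\[
\sigma_L^2 \le U_{1,z}(L)\,\pi(\theta_0)V .
\]

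\textbf{Covariance terms: conditioning.} Condition on $\mathcal{F}_{jL}$, the history of $X$ up to time $jL$ together with all independent coupled chains used for earlier $G_z^L$ evaluations. By the Markov property, $\mathbb{E}[W_j\mid\mathcal{F}_{jL}] = \bar h(X_{jL})$, where
\[
\bar h(x) := \mathbb{E}^{\theta_0}_x\!\left[H(\{X_i\}_{i=0}^L)\right].
\]
Stationarity gives $\pi(\theta_0)\bar h = \gamma'(\theta_0)$. Also, $W_0$ is $\mathcal{F}_{L}$-measurable, since the $G_z^L(X_L)$ chain is generated independently before time $L$. Hence, for $j\ge1$,
\[
\sigma_{j,L} = \mathbb{E}_{\pi(\theta_0)}\!\left[W_0\left(P^{(j-1)L}\bar h(X_L) - \pi(\theta_0)\bar h\right)\right].
\]

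\textbf{Covariance terms: geometric decay.} Next we need a bound of the form
\[
\left|P^{n}\bar h(x) - \pi\bar h\right| \le C\,\sqrt{V(x)}\,\rho^{n}
\]
with $C^2$ comparable to $M^2 U_{1,z}(L)$. To get it, couple two copies of the chain started at $x$ and at $X'\sim\pi(\theta_0)$. They agree after $\tau_{x,X'}$, so
\[
P^n\bar h(x)-\pi\bar h = \mathbb{E}\!\left[\left(\bar h(X_n)-\bar h(X'_n)\right)\mathbbm{1}(\tau>n)\right].
\]
Apply Cauchy--Schwarz to this. Jensen gives $\bar h^2\le\Gamma^L_1\le U_{1,z}(L)V$, and Assumption \ref{assumption-coupling} gives $P(\tau>n)\le M(V(x)+\pi V)\rho^n$.

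\textbf{Main obstacle.} Cauchy--Schwarz naturally produces $\rho^{n/2}$ and a square root of $M$, not the clean factor $M\rho^{L}$ per lag. I expect the authors instead use the cruder estimate
\[
|\sigma_{j,L}| \le \mathbb{E}\!\left[|W_0|\,|\bar h(X_{jL})-\pi\bar h|\,\mathbbm{1}(\tau>(j-1)L)\right],
\]
bound $|W_0|\,|\bar h|$ through $\Gamma^L_1$ via Cauchy--Schwarz (giving $U_{1,z}(L)\pi V$), and absorb the tail probability as $M\rho^{jL}$ after averaging $V$ under $\pi$. The term $j=1$ (no decay available before one block) is treated separately and contributes at most $2\sigma_L^2$ in absolute value. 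Together with the variance term, this accounts for the constant $3$.

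\textbf{Summing the lags.} For $j\ge2$ we have $|\sigma_{j,L}|\le U_{1,z}(L)\,\pi(\theta_0)V\,M\rho^{(j-1)L}$. Summing the geometric series,
\[
2\sum_{j\ge2}|\sigma_{j,L}| \le U_{1,z}(L)\,\pi(\theta_0)V\,\frac{2M\rho^L}{1-\rho^L}.
\]
Adding the variance term and the $j=1$ term yields the stated bound
\[
\sigma^2_{L} + 2\sum_{j=1}^{\infty}\sigma_{j, L} \le U_{1, z}(L)\,\pi(\theta_0)V\left(3 + \frac{2M\rho^L}{1-\rho^L}\right).
\]
The delicate point is keeping the $V$-dependence integrable. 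I would handle it by using the $\sqrt V$ Lyapunov remark, so that the products arising from Cauchy--Schwarz stay at order $\pi(\theta_0)V$.
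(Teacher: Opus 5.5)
Your skeleton matches the paper's proof: $\sigma_L^2\le\pi(\theta_0)\Gamma^L_1$, the representation $\sigma_{j,L}=\mathbb{E}_{\pi(\theta_0)}[W_0(P^{(j-1)L}\bar h(X_L)-\pi(\theta_0)\bar h)]$, the bound $\sigma_{1,L}\le\sigma_L^2$ that produces the constant $3$, and a geometric series over $j\ge2$. The gap is the decay estimate for $j\ge2$, which is the step that carries the content, and you never establish it.

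\textbf{Why your routes fail.} Your coupling-plus-Cauchy--Schwarz argument gives a bound like $\sqrt{2MU_{1,z}(L)}\,(\bar V(x)+\pi(\theta_0)V)\,\rho^{n/2}$. That has the wrong rate, the wrong power of $M$, and growth linear in $V(x)$ rather than $\sqrt{V(x)}$. The outer Cauchy--Schwarz would then need $\pi(\theta_0)(V^2)<\infty$, which is not assumed.

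The fallback ``cruder estimate'' does not repair this. Once Cauchy--Schwarz is spent on $|W_0|\,|\bar h(X_{jL})-\pi(\theta_0)\bar h|$, no integrability is left to extract $P(\tau>(j-1)L)$ at all, let alone to the first power. A three-factor H\"older bound would give only a fractional power of the tail and would need the higher moments $\Gamma^L_\kappa$. The bookkeeping is also inconsistent: averaging $M(V(x)+\pi V)\rho^n$ and multiplying by $U_{1,z}(L)\pi(\theta_0)V$ gives $(\pi(\theta_0)V)^2$, and the lag is $(j-1)L$, not $jL$. So $|\sigma_{j,L}|\le U_{1,z}(L)\pi(\theta_0)V\,M\rho^{(j-1)L}$ in your summation step is asserted, not proved.

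\textbf{The missing idea.} No coupling is needed here. The paper uses the geometric-ergodicity remark \eqref{eq: remark2}, i.e.\ ergodicity in the $\sqrt V$-weighted norm; this is what the $\sqrt V$ Lyapunov remark you mention actually gives you. It holds with constants $R,r$, and Assumption \ref{assumption-coupling} stipulates $R<M$, $r<\rho$ so that $M,\rho$ can replace them.

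By Jensen, $\bar h(x)^2=\Gamma^L_{\frac{1}{2}}(x)^2\le\Gamma^L_1(x)\le|\Gamma^L_1|_V\,V(x)$. So $|\bar h|\le|\Gamma^L_1|_V^{1/2}\sqrt V$, and the remark gives $|P^n\bar h(x)-\pi(\theta_0)\bar h|\le|\Gamma^L_1|_V^{1/2}M\sqrt{V(x)}\rho^n$, which is exactly the form you were aiming for.

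Because the $x$-dependence is $\sqrt{V(x)}$, the outer Cauchy--Schwarz gives
$\sigma_{j,L}\le(\pi(\theta_0)\Gamma^L_1)^{1/2}(\pi(\theta_0)V)^{1/2}|\Gamma^L_1|_V^{1/2}M\rho^{(j-1)L}\le|\Gamma^L_1|_V\,\pi(\theta_0)V\,M\rho^{(j-1)L}$.
With Lemma \ref{lemma: Gamma}, your summation step then goes through as written.
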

The proof is recorded in Section \ref{sec: proof-lemma3}.
Given this quantified upper bound on the asymptotic variance, our objective now shifts to establishing general parameter-selection guidelines aimed at minimizing the work-variance product of our estimator.\\
\textbf{Choice of $z$:} First we see that
\begin{equation}\label{eq: Etau}
    \begin{aligned}
        P_{X_0\sim\pi(\theta_0)}\left(\tau_{X_0, z} > t\right) &= \int_{\mathcal{X}}P\left(\tau_{y, z} > t\right)\pi(\theta_0, dy)\\
        &\leq  \int_{\mathcal{X}}M(V(y) + V(z))\rho^{t}\pi(\theta_0, dy)\\
        &\leq M\left(\pi(\theta_0)V+V(z)\right)\rho^t.
    \end{aligned}
\end{equation}
Therefore $\mathbb{E}\left[\tau_{\pi(\theta), z}\right] \leq \frac{M}{1-\rho}\left(\pi(\theta_0)V+V(z)\right)$.
Furthermore, note that the upper bound $U_{1, z}(L)$ also decreases monotonically as $V(z)$ decreases.
Thus the most obvious choice of $z$ is to make it a state such that $V(z)$ is as small as possible.
It makes sense in most cases as a smaller $V(z)$ usually entails that $z$ is in a more frequently visited region, reflecting more typical behavior of the Markov chain. \\
\textbf{Choice of $L$:} In contrast, the selection of the lagging parameter $L$ is a little more intricate, as it inherently depends on less readily available quantities such as the smallest geometric convergence rate $\rho$ satisfying our theoretical assumptions.
Nevertheless, we propose practical guidelines relying solely on accessible information, demonstrating that employing the $L$-skeleton approach can yield performance improvements by at least a certain factor compared to the naive case $L = 1$.
\begin{theorem}\label{theorem: efficiency}
    Suppose we define
    \begin{equation}
        \newnota{W-L}{W(L)} := \left(L + 2\mathbb{E}[\tau_{\pi(\theta_0), z}]\right)U_{1, z}(L)\pi(\theta_0)V\left(3 + \frac{2M\rho^L}{1-\rho^L}\right),
    \end{equation}
    then
    \begin{equation}
        \frac{W(1)}{W(\mathbb{E}[\tau_{\pi(\theta_0), z}] - 1)} > \frac{\mathbb{E}[\tau_{\pi(\theta_0), z}] - 1}{3}.
    \end{equation}
\end{theorem}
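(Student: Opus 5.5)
The plan is to set $T:=\mathbb{E}[\tau_{\pi(\theta_0),z}]$ and $L^*:=T-1$. The ratio $W(1)/W(L^*)$ then splits into three factors, each of which I bound separately:
\begin{equation*}
\frac{W(1)}{W(L^*)}=\underbrace{\frac{1+2T}{L^*+2T}}_{(\mathrm{I})}\cdot\underbrace{\frac{U_{1,z}(1)}{U_{1,z}(L^*)}}_{(\mathrm{II})}\cdot\underbrace{\frac{3+\frac{2M\rho}{1-\rho}}{3+\frac{2M\rho^{L^*}}{1-\rho^{L^*}}}}_{(\mathrm{III})}.
\end{equation*}
The factor $\pi(\theta_0)V$ cancels. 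Factor (I) is elementary: $L^*+2T=3T-1$, so $(\mathrm{I})=(1+2T)/(3T-1)>2/3$. This is where the constant $3$ in the statement should come from.

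Factor (III) is at least $1$, because $x\mapsto x/(1-x)$ is increasing on $(0,1)$ and $\rho^{L^*}\le\rho$. I will keep it only as a harmless factor $\ge 1$, or use it to absorb slack if needed.

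The real content is in (II). Write $q:=\rho^{\frac{\delta}{2p(2p+\delta)}}\in(0,1)$ and $s(x):=x/(1-x)$. Lemma \ref{lemma: Gamma} with $\zeta=1$ gives $U_{1,z}(L)=L\bigl(A_z^1+s(q^L)^2B_z^1\bigr)$. The naive monotonicity bound $U_{1,z}(L^*)\le L^*U_{1,z}(1)$ loses a full factor $L^*$, so it is useless here. Instead I will use the slow-mixing link between $T$ and $\rho$ obtained in \eqref{eq: Etau}, namely $T\le \frac{M}{1-\rho}(\pi(\theta_0)V+V(z))$. This forces $1-\rho\lesssim 1/T$, and hence $1-q\lesssim 1/T$. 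At $L=1$ this makes the $B$-term large: $s(q)^2\gtrsim T^2$. At $L=L^*$, on the other hand, $q^{L^*}$ is bounded away from $1$, so $s(q^{L^*})^2$ is bounded. Comparing the two should give $(\mathrm{II})\gtrsim T^2/L^*\ge L^*$. I will also use $B_z^1\ge A_z^1$ (from the explicit formula, assuming $M\ge 1$) so that the $B$-terms control the sums in both numerator and denominator.

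Multiplying (I) $>2/3$ by (II) $\gtrsim L^*$ by (III) $\ge1$ then yields $W(1)/W(L^*)>(T-1)/3$, provided the constants line up.

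The main obstacle is precisely that alignment of constants in (II). The relation $T\lesssim 1/(1-\rho)$ carries the constants $M$, $\pi(\theta_0)V$, $V(z)$ and the exponent $\frac{\delta}{2p(2p+\delta)}$. These must be shown to be absorbed by the extra factor available from (III), which is of order $M/(1-\rho)$, and by the slack between $2/3$ and $1/3$. My first attempt will bound $s(q)\ge \frac{1}{1-q}-1$ and $1-q\le \frac{\delta}{2p(2p+\delta)}(1-\rho)$ using concavity of $x\mapsto x^a$ for $a<1$. If this does not close cleanly, I will fall back on showing the inequality in the regime where the $B$-term of $U_{1,z}(1)$ dominates, which is the slow-mixing setting the theorem targets, and state that regime explicitly.
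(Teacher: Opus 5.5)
Your split into (I), (II), (III) matches the paper in effect: the paper drops (III) $\ge 1$ and combines (I) $>2/3$ with (II) $\ge (T-1)/2$. Your plan for (II), however, has a genuine gap.

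Two steps are wrong as stated. First, concavity of $x\mapsto x^a$ with $a=\frac{\delta}{2p(2p+\delta)}<1$ gives $1-\rho^a\ge a(1-\rho)$, the reverse of what you claim. The only usable upper bound of this kind is $1-q\le 1-\rho$. Second, the claim that $q^{L^*}$ stays bounded away from $1$ needs $T$ to be at least of order $1/(1-q)$, and nothing in the assumptions provides that. Assumption \ref{assumption-coupling} bounds the coupling tail only from above, so $T$ can be much smaller than $1/(1-\rho)$, and then $s(q^{L^*})$ is large.

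More fundamentally, going through \eqref{eq: Etau} yields at best $s(q)\ge T/\bigl(M(\pi(\theta_0)V+V(z))\bigr)-1$. This carries $M(\pi(\theta_0)V+V(z))$ to the first power, whereas the only compensating constant in (II) is $B_z^1/A_z^1=M^{2a}\bigl(1+(\frac{b}{1-\lambda})^{2a}+V(z)^{2a}\bigr)$. When $T$ is comparable to $M(\pi(\theta_0)V+V(z))$, your lower bound is vacuous and (III) gives nothing. You would therefore end up with only the restricted-regime statement from your fallback, which is weaker than the theorem.

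The missing idea is to relate $T$ to $q=\rho^a$ directly, with the constants raised to the power $a$. You do this by raising the tail probabilities to the power $a$ before summing:
\begin{equation*}
T-1\le\sum_{t\ge1}P(\tau_{\pi(\theta_0),z}>t)\le\sum_{t\ge1}P(\tau_{\pi(\theta_0),z}>t)^a\le M^a\bigl(\pi(\theta_0)V+V(z)\bigr)^a s(q).
\end{equation*}
Subadditivity of $x\mapsto x^{2a}$ (note $2a<1$) and $\pi(\theta_0)V\le b/(1-\lambda)$ then give exactly $s(q)^2B_z^1\ge (T-1)^2A_z^1$.

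For the denominator, replace the ``bounded'' claim by $s(h^L)\le s(h)/L$ for $h\in(0,1)$. This follows from $1-h^L=(1-h)\sum_{i<L}h^i\ge Lh^{L-1}(1-h)$. It gives $U_{1,z}(L^*)\le L^*A_z^1+s(q)^2B_z^1/L^*$ with no lower bound on $T$ needed. Then
\begin{equation*}
(\mathrm{II})\ge\frac{s(q)^2B_z^1}{L^*A_z^1+s(q)^2B_z^1/L^*}\ge\frac{1}{L^*/(T-1)^2+1/L^*}=\frac{T-1}{2}.
\end{equation*}
Together with (I) $>2/3$ and (III) $\ge1$, this finishes the proof without restricting to any regime. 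Neither $B_z^1\ge A_z^1$ nor $M\ge1$ is needed.
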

The proof is recorded in Section \ref{sec: proof-theorem4}. 
Theorem \ref{theorem: efficiency} suggests that if we select $L = \mathbb{E}[\tau_{\pi(\theta_0), z}] - 1$, then the work-variance product upper bound would be reduced to at most $\left(\mathbb{E}[\tau_{\pi(\theta_0), z}] - 1\right)/3$ fraction of the original one with $L = 1$. 
In scenarios involving slow-mixing Markov chains, where coupling times tend to be substantially prolonged, such a guideline provides a significant efficiency improvement. 
This guideline is practical, as it relies exclusively on estimating the expected coupling time, a quantity that is typically straightforward to approximate empirically. 
The theorem suggests that the improvement is at least this much and we encourage using it as a starting point and trying larger $L$ values.
While the choice $L = \mathbb{E}[\tau_{\pi(\theta_0), z}] - 1$ is rarely optimal for minimizing inefficiency, it serves effectively as an initial reference point.
Practitioners are encouraged to explore larger values of $L$ to potentially achieve further performance gains.\\
For the previously developed unbiased estimators for the regular stationary mean as described in Section \ref{sec: stationary-mean}, the choice of $L$ is suggested to be a large quantile of the coupling time which is the same as the guideline for selecting $k$ and such choice would incur very minor cost \cite{atchade2024unbiased}.
In our setting, the choice should be made with more care since the $L^2A_z^{1}$ term in $W(L)$ increases with square rate. 
Nonetheless, for slow-mixing Markov chains (i.e., when $\rho$ is close to 1), it generally remains advantageous to err on the side of selecting larger values of $L$.
In such cases, the exponential decay of the term $\frac{\rho^L}{1-\rho^L}$ greatly outweighs the linear increase of $L$, ensuring improved overall estimator efficiency.

\section{Transition kernel represented by neural networks \jw{Ready to be reviewed}}\label{sec-application}
One significant advantage of our proposed estimator is its ease of implementation and broad applicability to neural network (NN)-based parametrizations.
Although likelihood-ratio-based methods can also be applied to NN parametrizations, they often require restrictive conditions, such as identifiable regenerative structures or exact knowledge of the stationary mean—assumptions rarely met in practice.
In this section, we present two fundamental scenarios demonstrating the applicability of our gradient estimator without imposing any additional structural requirements.

\subsection{Policies parametrized by Neural Networks}
Consider the reinforcement learning (RL) setting where we can control the Markov Decision Process (MDP) by changing the parametrized policy. 
Given current state $s$, we abuse the notation a little in this section and denote for now that $\pi_{\theta}(a|s)$ as a policy represented by a neural network where $\theta$ is the weights and biases of the network.
$\pi_\theta(\cdot|s)$ outputs a probability distribution on the action space $\mathcal{A}$.
The system dynamics, represented by $P(\cdot|s, a)$ will give the probability distribution for the next state $s'$.
The system dynamics here is assumed to be known or learned. 
In this case,
\begin{equation}
    P(\theta, s, s') = \int_{\mathcal{A}} \pi_{\theta}(a|s)P(s'|s, a),
\end{equation}
and if we may interchange the derivative and the integration,
\begin{equation}
    \frac{d}{d\theta}P(\theta, s, s') = \int_{\mathcal{A}} \frac{d}{d\theta} \pi_{\theta}(a|s)P(s'|s, a).
\end{equation}
If the action space $\mathcal{A}$ is finite (such as in a preemptive queuing control problem), then given $(s, s')$, both of the above expressions can be evaluated exactly as $\pi_{\theta}(a|s)$ can be obtained by forward feeding the neural network and $\frac{d}{d\theta} \pi_{\theta}(a|s)$ can be obtained by back-propagation. 
Thus the oracle to numerically compute transition densities and their derivatives is available.
When the action space is uncountable or infinite, we may consider making the state-action pair $(s, a)$ as a state of a markov chain on the joint state-action space $\mathcal{X}\times\mathcal{A}$.
Then the transition kernel becomes
\begin{equation}
    P(\theta, (s, a), (s', a)) = P(s'|s, a)\pi_{\theta}(a'|s'),
\end{equation}
and assuming the interchange between the derivative and the integration,
\begin{equation}
    \frac{d}{d\theta}P(\theta, (s, a), (s', a)) = P(s'|s, a)\frac{d}{d\theta}\pi_{\theta}(a'|s').
\end{equation}
Consequently, when the objective involves optimizing a long-run average or stationary expectation of a cost or performance metric of the MDP, our gradient estimator offers a straightforward implementation.
Its computational tractability makes it particularly suitable for gradient-based optimization algorithms.

\subsection{Dynamics Models parametrized by Neural Networks}
An alternative scenario arises when the transition dynamics of the system are inferred directly from observed data, an approach commonly referred to as model-based RL. 
Given a learned dynamics model, one typically simulates trajectories from this model to facilitate policy optimization.
However, since optimization occurs based on the approximated model rather than the true dynamics, assessing the sensitivity of the learned policy's performance with respect to changes in the estimated dynamics is crucial.
This question of sensitivity analysis naturally aligns with the capabilities of our proposed gradient estimator. 
Next we discuss some details on how system dynamics can be parametrized.
The easiest way to parametrize the dynamics is through a NN that is a deterministic mapping from a state-action pair to another state.
A more general and sophisticated parametrization is a probabilistic one where the NN maps a state-action pair to parameters of a parametrized distribution.
Empirically, probabilistic models have demonstrated performance comparable to model-free RL methods \cite{chua2018deep}.
Let $\mathcal{D}$ denote a training dataset that stores the transitions that has been experienced, then one can minimize the negative log prediction density (NLPD) as our loss function:
\begin{equation}
    loss(\phi) := - \sum_{(s_t, a_t, s_{t+1}) \in \mathcal{D}} \log P_{\phi}\left(s_{t+1}|s_t, a_t\right).
\end{equation}
where $P_{\phi}\left(s_{t+1}|s_t, a_t\right)$ is the transition density under NN parameter $\phi$.
Let $\phi^* := argmin_{\phi} loss(\phi)$ be the learned dynamics model parameter, then we can simulate the MDP with the learned transition dynamics $s_{t+1}\sim P_\phi\left(\cdot |s_t, a_t\right)$.
In this case, let $\pi^*$ be an optimized policy, then
\begin{equation}
    P(\phi, (s, a), (s', a')) = P_\phi\left(s' |s, a\right)\pi^*(a'|s'),
\end{equation}
and assuming the interchange between the derivative and the integration
\begin{equation}
    \frac{d}{d\phi}P(\phi, (s, a), (s', a')) = \frac{d}{d\phi}P_\phi\left(s' |s, a\right)\pi^*(a'|s').
\end{equation}
A more specific example where the NN maps to a Gaussian distribution can be found in \cite{chua2018deep}.
The transition densities and their derivatives can be computed from forward-feeding and back-propagations of NNs again. 
Therefore, without imposing any further restrictive conditions, our gradient estimator provides an effective tool for conducting sensitivity analysis on optimized policies $\pi^{*}$, specifically regarding perturbations in the learned dynamics model parameters.

\section{Numerical Experiments \jw{Ready to be reviewed}}\label{sec-NU}
In this section, we present numerical experiment results on three different scenarios: a heavy-traffic single server queue waiting time sequence where the state space is continuous; a multi-class queuing network control problem where the policy is parametrized by a neural network; and last an ising model control problem where the control is also parametrized by a neural network but with higher-dimensional state space.

\subsection{$M/M/1$ Queue}
We first illustrate our results in a simple $M/M/1$ customer waiting time sequence setting. 
Although our estimator has almost minimum requirement to implement, many of the previous gradient estimators that we want to compare with require analytical knowledge on the parametrized transition kernel or specific ways of parametrization and the $M/M/1$ example satisfies all the analytical requirements.
Nevertheless, the simple queuing model can still pose complex behavior and make the estimation task hard under heavy traffic scenarios because of the long regeneration cycles and the slow mixing rate.
 For the remainder of the section, $X_n$ represents the waiting time of the $n$th customer where $X_0 = 0$. 
The inter-arrival rate is fixed at 5 and the service rate is $\theta$. 
Thus our Markov chain evolves according to the Lindley recursion:
\begin{equation}
    X_{n+1} = \max(0, X_{n} + S^{\theta}_n - T_n),
\end{equation}
where $\newnota{S}{S^{\theta}_n}$'s are independent exponential random variables with rate $\theta$ and $\newnota{T}{T_n}$'s are independent exponential random variables with rate $5$.
The mean waiting time under stationarity is of interest here so in this case $\gamma(\theta) = \mathbb{E}_{X \sim \pi(\theta)}\left[X\right]$.
The goal is to estimate $\frac{d}{d\theta}\gamma(\theta)|_{\theta = \theta_0}$ where $\theta_0=5.2$ in our experiment.
The true analytical derivative here is rounded to about $-24.96$.
We then present several benchmark gradient estimators and the details on how they are implemented in the $M/M/1$ setting.

\subsubsection{The IPA estimator \cite{glasserman1993regenerative}}
Let $X(\theta) = \{X_n(\theta); n \geq 0\}$ be a Markov chain parametrized by $\theta$. 
Then under certain conditions,
\begin{equation}\label{eq: IPA-regen}
    \gamma'(\theta) = \mathbb{E}[X_\infty(\theta)]' = \mathbb{E}[X'_\infty(\theta)] = \frac{\mathbb{E}\Big[\sum_{i = \tau_{k-1}}^{\tau_k}X'_i(\theta)\Big]}{\mathbb{E}[\tau_k - \tau_{k-1}]},
\end{equation}
where $X'(\theta) := \{X'_n(\theta), n \geq 0\}$ represents the derivative sequence of the process $X$ and $\tau_k$ represents the k-th regeneration time of $X'$.
This paper mainly considers the special case where 
\begin{equation}
    X_{n+1}(\theta) = \phi(X_n(\theta), U_n(\theta)), \quad n\geq 0,
\end{equation}
for some recursive function $\phi$ and input sequence $U(\theta) = \{U_n(\theta); n \geq 0\}$.
Our $M/M/1$ setting fits exactly here. 
Note that by inverse CDF method, we know $-\frac{1}{\theta}\ln(U)$ follows the same distribution as $S^{\theta}_n$ where $U\sim Unif(0,1)$.
Thus in our case, the waiting time evolves according to
\begin{equation}
    X_{n+1} = \max(0, X_n -\frac{1}{\theta}\ln(U_n) - T_n),
\end{equation}
then 
\begin{align}
    X'_{n+1} &= X'_n + \frac{1}{\theta^2}\ln(U_n), \quad for \quad X_n(\theta) -\frac{1}{\theta}\ln(U_n) - T_n > 0,\\
    X'_{n+1} &= 0,  \quad\quad\quad\quad\quad\quad otherwise.
\end{align}
Thus the problem is then transformed to estimating the stationary mean of the derivative process $X'$ while $X'$ possesses the same regenerative structure as $X$ since both processes hit $0$ at the same moment.
Now define
\begin{equation}
    H^N_{IPA} := \frac{\frac{1}{N}\sum_{i=1}^N H^{(i)}}{\frac{1}{N}\sum_{i=1}^N \tau^{(i)}},
\end{equation}
where $\tau^{(i)}$'s are i.i.d. copies of the length of one regeneration cycle, and $H^{(i)}$'s are i.i.d. copies (independent from the $\tau^{(i)}$'s already generated) of the sum over one regeneration cycle defined as
\begin{equation}
    H^{(i)} := \sum_{j = \tau_{i-1}}^{\tau_i}X'_i.
\end{equation}
Hence based on \eqref{eq: IPA-regen}, $H^N_{IPA}$ is the proposed estimator for $\gamma'(\theta_0)$. 
Table~\ref{tab:performance} records the performance of $H^N_{IPA}$ with different values $N$. 
Note that the regeneration structure is utilized here and hence caused the initial biases for small $N$'s but it is asymptotically unbiased.

\subsubsection{The Phantom Estimator \cite{heidergott2006measure}} \label{Phantom}
Assume that for any $x \in \mathcal{X}$, $P'(\theta)$ exists such that
\begin{equation}
    \frac{d}{d \theta} \int_S P(\theta, x, d y) g(y)=\int_S P^{\prime}(\theta, x, d y) g(y)
\end{equation}
for any $x \in \mathcal{X}$. 
Then let $\left(\left[P^{\prime}(\theta)\right]^{+}(x ; \cdot),\left[P^{\prime}(\theta)\right]^{-}(x ; \cdot)\right)$ denote the Hahn-Jordan decomposition of the signed measure $P^{\prime}(\theta, x, \cdot)$ and define the normalizing constant to be
$$
c_{P(\theta)}(x):=\left[P^{\prime}(\theta)\right]^{+}(x ; \mathcal{X})=\left[P^{\prime}(\theta)\right]^{-}(x ; \mathcal{X}).
$$
Also define ${P}^+(\theta)$ and ${P}^-$ to be the normalized transition kernel corresponding to the Hahn-Jordan decomposition:
\begin{equation}
    P^{+}(\theta, x, \cdot)=\frac{\left[P^{\prime}(\theta)\right]^{+}(x ; \cdot)}{c_{P(\theta)}(x)}, \quad P^{-}(\theta, x, \cdot)=\frac{\left[P^{\prime}(\theta)\right]^{-}(x ; \cdot)}{c_{P(\theta)}(x)}.
\end{equation}
In the $M/M/1$ setting, an easy decomposition is to decompose the exponential service time $Exponential(\theta)$ to be a difference between an $Exponential(\theta)$ r.v. and a $Gamma(2, \theta)$ r.v. and the normalizing constant $c_{P(\theta)}(x) = 1/\theta$ for all $x$.
For decompositions of more common distributions, refer to Table A.1 in \cite{farenhorst2010efficient}.
Next, we introduce the so called phantom processes $X^+:=\{X_n^+, n \geq 0\}$ and $X^-:=\{X_n^-, n \geq 0\}$ where $X_0^+=X_0^-=x$ for some starting point $x$, the first transitions from $X^+_0$ to $X^+_1$ and from $X^-_0$ to $X^-_1$ are governed by kernels ${P}^+(\theta)$ and ${P}^-(\theta)$ respectively.
After the first transition, for $n \geq 2$, $X^+_n$ and $X^-_n$ evolve according to the regular coupled kernel $\Bar{P}(\theta)$.
Then under some ergodicity conditions, our desired quantity can be written as
\begin{equation}\label{'eq: heidergott'}
    \gamma'(\theta) = \mathbb{E}_{X_0 \sim \pi(\theta)}\left[\mathbf{D}\left(P(\theta), g ; X_0\right)\right],
\end{equation}
where
\begin{equation}\label{'eq: MVD'}
    \mathbf{D}\left(P(\theta), g ; x\right)=\mathbb{E}_{x}\left[c_{P(\theta)}(x) \sum_{n=0}^{\tau^{\pm}_{x}}\left(g(X^+_n)-g\left(X^-_n\right)\right)\right],
\end{equation}
and $\tau^{ \pm}=\inf \left\{n \in \mathbb{N}: X_n^{+}=X_n^{-}\right\}$ is the coupling time between $X^+$ and $X^-$.
Thus \eqref{'eq: heidergott'} transforms the derivative estimation into a stationary mean estimation for a different function.
In the single server queue setting, $0$ is an atom for the waiting time sequence and with that available, \cite{heidergott2006measure} proposes to utilize the following expression to estimate the derivative:
\begin{equation}
    \gamma'(\theta_{0}) = \frac{1}{\mathbb{E}\left[\tau^{0}_{\theta}\right]}\mathbb{E}_{X_0 = 0}\left[\sum_{i=0}^{\tau^{0}_{\theta}-1}\mathbf{D}\left(P(\theta), g ; X_i\right)\right]
\end{equation}
where $\tau^{0}_{\theta}:= \inf\{n \geq 1: X_n = 0\}$ is the first time the chain hits $0$.
Now define 
\begin{equation}\label{'eq: MVD-estimator'}
    \hat{\mathbf{D}}\left(P(\theta), g ; x\right)=\frac{1}{\theta} \sum_{n=0}^{\tau^{\pm}_{x}}\left(g(X^+_n)-g\left(X^-_n\right)\right),
\end{equation}
to be a estimator for $\mathbf{D}\left(P(\theta), g ; x\right)$.
Then assume we generate $N$ regeneration cycles.
Then the phantom estimator is defined as 
\begin{equation}
    H_{Ph}^{N}:=  \frac{\frac{1}{N}\sum_{i=1}^N \sum_{j=0}^{\tau^{0}_{\theta, (i)}-1}\hat{\mathbf{D}}\left(P(\theta), g ; X_j^{(i)}\right)}{\frac{1}{N}\sum_{i=1}^N \tau^{0}_{\theta, (i)}},
\end{equation}
where $\tau^{0}_{\theta, (i)}$ is the length of the $i$-th regeneration cycle, $X_{j}^{(i)}$ is the $j$-th state in the $i$-th regeneration cycle and each of the estimator $\hat{\mathbf{D}}$ is generated independently.  
Table~\ref{tab:performance-ph} records the performance of $H^{N}_{Ph}$ with different values of $N$. 
This estimator is also asymptotically unbiased.
In \cite{heidergott2010perturbation}, the authors propose another implementation of the phantom estimator in the single server queue setting with a more complex dependence on the regeneration cycle and thus the performance is worse in the heavy traffic scenarios comparing to the implementation of the phantom estimator experimented here.

\subsubsection{The Regeneration Estimator \cite{glynn1995likelihood}}
Assume the Markov chain $X$ possesses a regenerative structure and thus the stationary mean can be written as 
\begin{equation}
    \gamma(\theta) = \mathbb{E}[f(X_\infty)] = \frac{u(\theta)}{l(\theta)},
\end{equation}
where
\begin{align}
    u(\theta) &:= \mathbb{E}_s^\theta\Big[\sum_{k=1}^{\tau_{\theta}} f(X_k)\Big];\\
    l(\theta) &:= \mathbb{E}_s^\theta[\tau_{\theta}]
\end{align}
for some regenerative state $s$ and regeneration time $\tau_{\theta} := \inf\{n\geq 1: X_n = s\}$.
This paper shows that under appropriate conditions,
\begin{equation}\label{'eq: glynn-lecuyer'}
    \gamma'(\theta) = \frac{u'(\theta)l(\theta) - l'(\theta)u(\theta)}{l(\theta)^2}
\end{equation}
where
\begin{align}
    u'(\theta) &:= \mathbb{E}_s^\theta\left[\left(\sum_{k=1}^{\tau_{\theta}}p'(\theta, X_{k-1}, X_{k})\right)\left(\sum_{k=1}^{\tau_{\theta}} f(X_k)\right)\right];\\
    l'(\theta) &:= \mathbb{E}_s^\theta\left[\left(\sum_{k=1}^{\tau_{\theta}}p'(\theta, X_{k-1}, X_{k})\right)\tau_{\theta}\right],
\end{align}
where $p'$ is defined as in Assumption \ref{assumption: r-g-A4}.
Thus \eqref{'eq: glynn-lecuyer'} is a nonlinear function of four expectations, each of which can be estimated by generating $N$ regenerative cycles and averaging the corresponding quantities.
Now we define the estimators for each of those terms:
\begin{equation}
    \begin{aligned}
        \hat{u}_{N}(\theta) &:= \frac{1}{N}\sum_{i=1}^{N}\left(\sum_{k=1}^{\tau^{0}_{\theta, (i)}}f(X_k^{(i)})\right);\\
        \hat{u'}_{N}(\theta) &:= \frac{1}{N}\sum_{i=1}^N \left(\sum_{k=1}^{\tau^{0}_{\theta, (i)}}p'(\theta, X_{k-1}^{(i)}, X_{k}^{(i)})\right)\left(\sum_{k=1}^{\tau^{0}_{\theta, (i)}}f(X_k^{(i)})\right);\\
        \hat{l}_{N}(\theta) &:= \frac{1}{N}\sum_{i=1}^{N}\tau^{0}_{\theta, (i)};\\
        \hat{l'}_{N}(\theta) &:= \frac{1}{N}\sum_{i=1}^N \left(\sum_{k=1}^{\tau^{0}_{\theta, (i)}}p'(\theta, X_{k-1}^{(i)}, X_{k}^{(i)})\right)\tau^{0}_{\theta, (i)},
    \end{aligned}
\end{equation}
where $\tau^{0}_{\theta, (i)}$ is the length of the $i$-th regeneration cycle with the regeneration state being $0$, $X_{k}^{(i)}$ is the $k$-th state in the $i$-th regeneration cycle.
Thus the final estimator is of the form
\begin{equation}
    H_{Re}^{N}:= \frac{\hat{u'}_{N}(\theta)\hat{l}_{N}(\theta) - \hat{l'}_{N}(\theta)\hat{u}_{N}(\theta)}{\hat{l}_{N}(\theta)^2}.
\end{equation}
The estimator is biased but consistent as $N \rightarrow \infty$.
The performance is recorded in Table~\ref{tab:performance-lr}\\

\subsubsection{The Likelihood Ratio Estimator \cite{glynn2019likelihood}}
This work proposes two likelihood ratio based estimators that do not depend on the availability of the regeneration structure.
We implement the one with a better asymptotic variance \cite{glynn2019likelihood}. 
Assume the stationary mean $\gamma(\theta)$ is known, then the proposed estimator takes the form
\begin{equation}
    H_{LR}^{N} := \frac{1}{N}\sum_{k=1}^{N-1} \sum_{l=k}^{N-1} \left(f(X_l) - \gamma(\theta_0)\right)p'(\theta_0, X_{k-1}, X_{k}),
\end{equation}
where $p'$ is defined as in Assumption \ref{assumption: r-g-A4}.
The performance is
is recorded in Table~\ref{tab:performance-LR}.
Note that unlike the previous estimators, as $N$ gets larger, the variance of $H^N_{LR}$ does not go to 0.
Thus the work-variance product grows without bound as we try make the bias go to 0. \\

\subsubsection{Our estimator $H_4^{k, m, L}(X, Y)$ and comparison with benchmark methods}

We implement our proposed estimator $H_4^{k, m, L}(X, Y)$ with parameter selections $k = L$ and $m = 100$. 
Table~\ref{tab:performance-h4} summarizes the numerical performance across various choices of the parameter $L$.
The results highlight a rapid decrease in inefficiency, quantified by the work-variance product, as $L$ initially increases from smaller values.
This improvement rate slows down significantly beyond an optimal region around $L \approx 1000$. 
This empirical behavior aligns with our theoretical guideline, suggesting it is generally advantageous to err on the side of selecting a somewhat larger $L$ rather than choosing a smaller, potentially suboptimal value.

Figure~\ref{fig:Comparisons} illustrates the comparative work-variance performance of our unbiased derivative estimator against several benchmark methods previously discussed. 
Specifically, the benchmarks include the Infinitesimal Perturbation Analysis (IPA) estimator $H_{\mathrm{IPA}}^{1\times 10^5}$, the regeneration estimator $H_{\mathrm{Re}}^{4\times 10^4}$, the phantom estimator $H_{\mathrm{Ph}}^{3\times 10^3}$, and the likelihood ratio estimator $H_{\mathrm{LR}}^{1\times 10^5}$.
To ensure a fair and representative comparison, the benchmark methods are evaluated at their respective optimal parameter choices, providing their best achievable performance as a reference.

Notably, our estimator demonstrates superior or competitive performance relative to all benchmark methods, with the exception of the IPA estimator, to which it remains very close in performance.
Our estimator's optimal choice of $L$ significantly reduces the work-variance product, showcasing its effectiveness and robustness.
Overall, the proposed estimator $H_4^{k, m, L}$ provides substantial efficiency advantages in estimating derivatives of stationary means, especially when $L$ is chosen according to our practical guidelines.

\begin{figure}[htbp]
    \centering
    \includegraphics[height=2.40in]{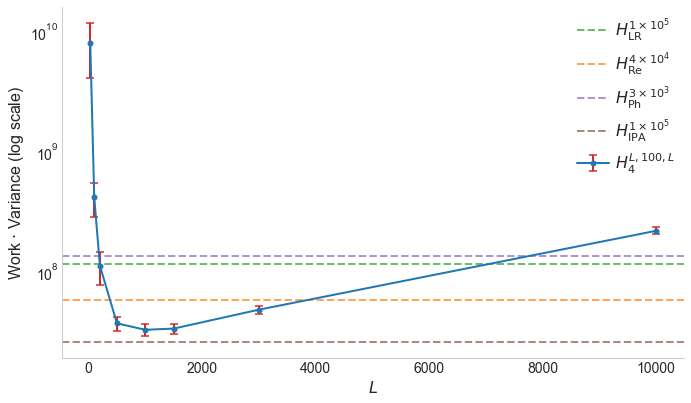}
    \caption{Comparison of the work-variance product of our unbiased derivative estimator $H_{4}^{L,100,L}$ with various previous methods as the parameter $L$ varies. The blue line with error bars illustrates our estimator's performance, depicting the trade-off between bias correction effort and variance reduction. The horizontal dashed lines represent the optimized work-variance products achieved by benchmark methods: the likelihood ratio estimator $H_{\mathrm{LR}}^{1\times10^5}$ (green), the regeneration estimator $H_{\mathrm{Re}}^{4\times10^4}$ (orange), the phantom estimator $H_{\mathrm{Ph}}^{3\times10^3}$ (purple), and the infinitesimal perturbation analysis estimator $H_{\mathrm{IPA}}^{1\times10^5}$ (brown). Error bars represent the 90\% confidence intervals based on simulation results.}
    \label{fig:Comparisons}
\end{figure}

\subsection{Multi-urn Ehrenfest Model}

In our second numerical illustration, we study a multi-urn Ehrenfest model \cite{karlin1965ehrenfest}, which characterizes the evolution of $n$ gas particles distributed across $N$ interconnected urns. 
Such models frequently arise in the analysis of particle diffusion processes, with control and sensitivity analyses playing critical roles in chemical and physical applications.
The model's state at discrete time $t$ is represented by the vector $X_{t} = \left(x_{t,i}\right)_{i=1}^{N}$, where each component $x_{t,i}$ denotes the number of particles in urn $i$ at time $t$.

At each discrete step, a particle from the entire population is uniformly selected, and transitions from urn $i$ to urn $j$ with probability
\begin{equation}
p_{ij} =
\begin{cases}
\frac{\theta_{i}}{n}, & i \neq j,\\[8pt]
1 - \frac{\theta_{i}(n-1)}{n}, & i = j,
\end{cases}
\end{equation}
where $\theta = \left(\theta_{i}\right)_{i=1}^{N}$ characterizes the diffusion rate at which particles escape from urn $i$.
These diffusion parameters are often influenced by external factors, such as heating sources or magnetic fields.
The performance metric we investigate is defined as:
\begin{equation}
f(X_t) = \sum_{i=1}^{N}\mathbbm{1}{x_{t,i} \geq C_i},
\end{equation}
where $C_i$ are predetermined occupancy thresholds for each urn.
The function $f(X_t)$ thus measures how many urns have reached or exceeded their respective thresholds at time $t$.
We are interested in the long-run average, defined by:
\begin{equation}
\gamma(\theta) := \lim_{T \rightarrow \infty} \frac{1}{T}\sum_{t=0}^{T} f(X_t),
\end{equation}
which represents the expected number of urns meeting or surpassing the occupancy thresholds under the stationary distribution.

The optimization of the diffusion parameters $\theta$ requires accurate estimation of the gradient $\gamma'(\theta)$. However, applying certain standard gradient estimation methods in this context presents difficulties. 
The Infinitesimal Perturbation Analysis (IPA) estimator is either impossible or not straightforward to implement due to the discrete state transitions inherent in the Ehrenfest model. 
Additionally, the likelihood ratio (LR) estimator necessitates explicit knowledge of the stationary distribution in closed form, which is unavailable here. 
Consequently, we restrict our numerical comparisons to our proposed estimator, the regenerative estimator (Re), and the Phantom estimator (Ph).

Figure~\ref{fig:ehrenfest_comparison} displays the numerical comparison of these three methods.
It is evident from the plot that our estimator (denoted $H_4^{L,100,L}$) outperforms the regenerative estimator for appropriately selected batch sizes $L$.
We selected moderate values for $N$ and $n$ to ensure that the system could regenerate with relative ease. 
However, the performance of the regeneration estimator rapidly deteriorates as the dimension increases, particularly when an effective regeneration state is unavailable. 
We will discuss more about this issue in Section \ref{sec: ising}.
On the other hand, the Phantom estimator exhibits superior performance in this scenario.
This advantage arises primarily from the distinct coupling mechanisms employed by each estimator.
Although the implementation of the Jordan decomposition is typically challenging—particularly impractical for neural network parameterizations—in this specific problem, it can be straightforwardly applied.
As a result, the Phantom estimator generates coupled Markov chains that differ by at most a single particle transition, leading to rapid coupling.
In contrast, our proposed estimator couples arbitrary states to a fixed reference state, which may result in longer coupling times and, consequently, higher computational demands.

\begin{figure}[htbp]
\centering
\includegraphics[width=0.7\linewidth]{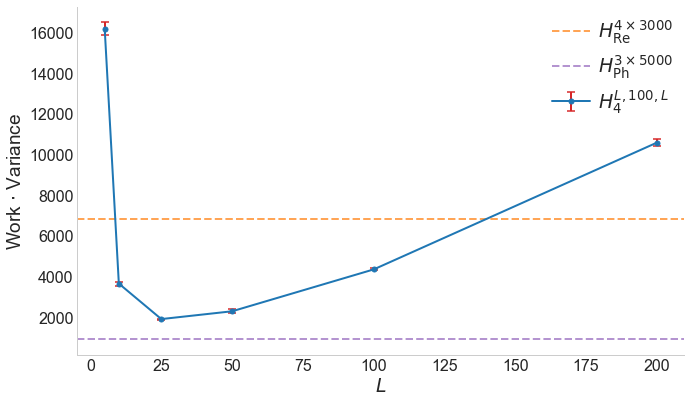}
\caption{Comparison of the work-variance product for the multi-urn Ehrenfest model across different estimators: our proposed estimator ($H_4^{L,100,L}$), the regenerative estimator ($H_{\mathrm{Re}}^{4\times 3000}$), and the Phantom estimator ($H_{\mathrm{Ph}}^{3\times 5000}$). Error bars indicate $90\%$ confidence intervals obtained from simulation experiments. The model consists of 5 interconnected urns containing a total of 25 particles, with identical diffusion rates $\theta_i = 1$ and occupancy thresholds $C_i = 6$ for all urns.}
\label{fig:ehrenfest_comparison}
\end{figure}

\jw{Add discussion and numerical plots here later}

\subsection{Two-class queuing network control}
Next we present a parallel server queuing network control problem as a Markov decision process where the policy is parametrized by a neural network.
Figure \ref{fig:N-model} shows the queuing network called the N-model with two classes of jobs whose arriving in the system with their inter-arrival time being exponentially distributed and with rates $\lambda_1 = 1.3\rho$ and $\lambda_2 = 0.4\rho$ where $\rho=0.9$ is the traffic intensity parameter.
Buffer 1 and Buffer 2 are buffers that hold class 1 and class 2 jobs respectively. 
Server 1 and Server 2 are two servers in the system.
Server 1 can process only class 1 jobs with a exponential service rate $m_1 = 1$ and Server 2 can process class 1 jobs with a exponential service rate $m_2 = 2$ and can process class 2 jobs with a exponential service rate $m_3 = 1$.
The state of the Markov decision process is a two dimensional vector $(x_1, x_2)$ where $x_1$ is the number of jobs in Buffer 1 and $x_2$ is the number of jobs in Buffer 2.
The holding cost of a state $(x_1, x_2)$ is defined as $h((x_1, x_2)) := 10x_1 + x_2$.
Let $\pi_{\bm{\theta}}$ denote the neural network parametrized policy that maps the state $(x_1, x_2)$ to a probability distribution over a binary action space where $\bm{\theta}$ is the weights and biases in the neural network.
An action $a \in \{1, 2\}$ is then sampled from the distribution $\pi_{\bm{\theta}}(a|x_1, x_2)$ whose gradient $\nabla_{\bm{\theta}} \pi_{\bm{\theta}}(a|x_1, x_2)$ can be numerically computed by back-propagation.
If $a = 1$, then Server 2 would preemptive class 1 jobs and if $a = 2$, Server 2 would preemptive class 2 jobs.
Since actions are finite, we can compute the exact transition probabilities and their gradients by conditioning on the actions we take and summing them up.
Check Section 5.3 in \cite{dai2022queueing} for the full transition dynamics of the discrete time Markov chain model of this queuing network.
In the reinforcement learning setting, we want to minimize the stationary mean holding cost of the network denoted as $\gamma(\bm{\theta})$.
Thus the goal here is to estimate the gradient of the weights and biases of the policy neural network with respect to the stationary mean holding cost incurred by the system, i.e. $\nabla_{\bm{\theta}}\gamma(\bm{\theta})$. 
Here we implement our estimator and the Glynn-L'ecuyer regeneration estimator as they are the only eligible options. 
To the best of our knowledge, deriving an IPA estimator or performing a Jordan decomposition for transition kernels parameterized by non-trivial neural networks remains impractical.
Furthermore, we do not have exact knowledge of the stationary mean so the likelihood ratio based method in \cite{glynn2019likelihood} is also not applicable here.
The entire gradient can be estimated but for the purpose of comparison and visualization, we only report the estimation of the partial derivative with respect to the last bias term of the neural network. 
Figure \ref{fig:N-model-Comparisons} presents the performance comparison between our method and the regeneration method and it shows that at optimal choice of $L$, the work-variance product decreases almost 50\% and for a wide range of choices for $L$, the performance of our estimator triumphs the regeneration estimator.

\begin{figure}[htbp]
    \centering
    \begin{subfigure}[b]{0.39\textwidth}
        \centering
        \includegraphics[width=\textwidth]{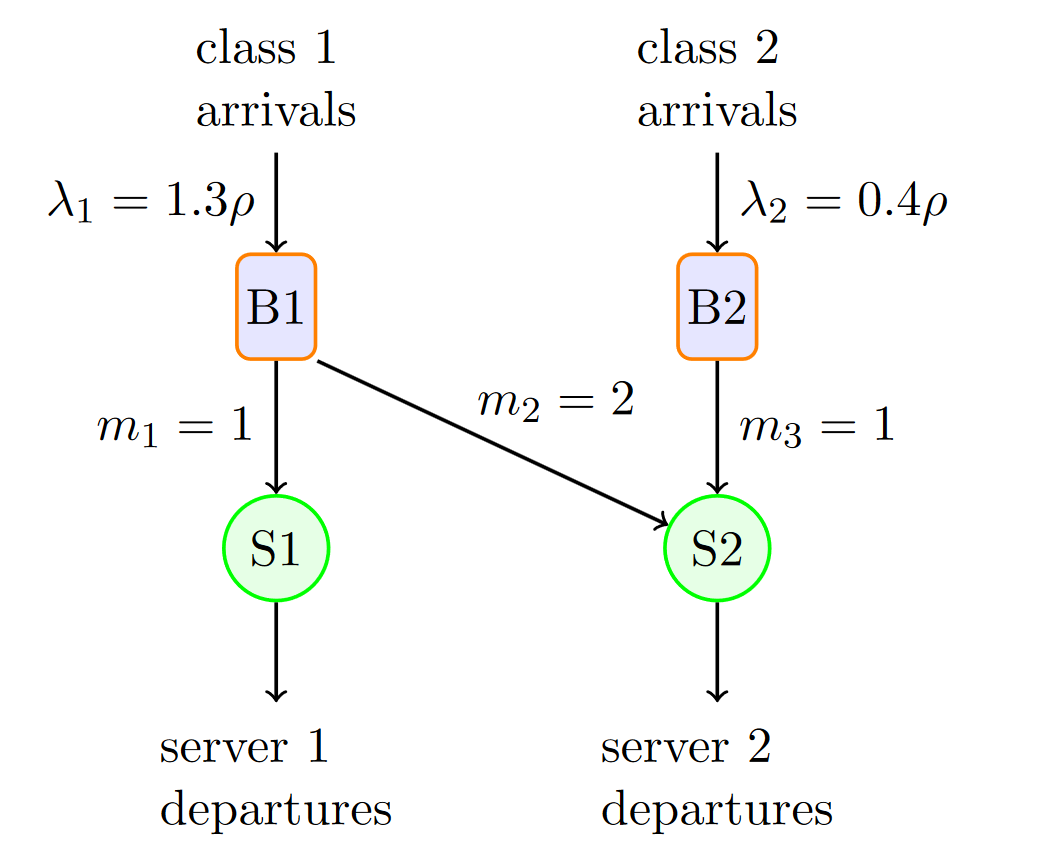}
        \caption{}
        \label{fig:N-model}
    \end{subfigure}
    \hfill
    \begin{subfigure}[b]{0.54\textwidth}
        \centering
        \includegraphics[width=\textwidth]{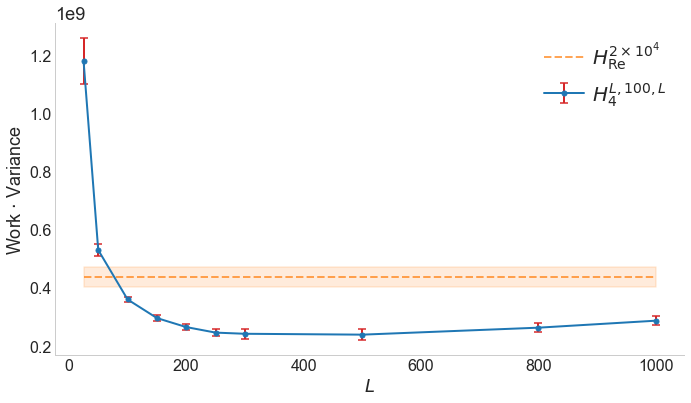}
        \caption{}
        \label{fig:N-model-Comparisons}
    \end{subfigure}

    \caption{(a) Structure of the two-class queuing network (N-model) with two job classes and two servers. 
    (b) Performance comparison between our proposed gradient estimator $H_4^{L,100,L}$ and the regeneration estimator $H_{Re}^{2\times 10^4}$. The shaded orange band represents the 90\% confidence interval for the regeneration estimator. At optimal choices of $L$, our estimator achieves approximately a 50\% reduction in the work-variance product, and consistently outperforms the regeneration estimator across a broad range of $L$ values.}
    \label{fig:N-model-combined}
\end{figure}

\subsection{Ising model Control}\label{sec: ising}
In the $M/M/1$ example, we show that our estimator outperforms the almost all previous methods except for the IPA estimator for a wide range choice of the skipping parameter $L$.
In the 2-class queuing network control problem, our estimator also exhibit the similar trait comparing to the Glynn-L'ecuyer regeneration based likelihood ratio estimator when the parametrization is by neural networks. 
In this section, we consider a non-queuing setting where the regeneration state becomes less accessible as dimension of the problem scales.\\
There has been some recent research that aims to maximize influence in social networks that are modeled by ising models \cite{liu2010influence,lynn2018maximizing,lynn2016maximizing}.
Ising models can represents general social networks with different interaction frameworks but here for the illustrative purposes we adapt the simple $2$-dimensional $d \times d$ square lattice as our example where each node only connects to its adjacent neighbors.
There is in total $d^2$ nodes and node on the $i$-th row and $j$-th column is indexed as $(i, j)$.
Each node $(i, j)$ is assigned a binary value $\sigma_{(i, j)} \in \{-1, 1\}$.
In the social network setting, $\sigma_i$ is the current opinion of individual $i$ such as being prone to republican or democratic party and each node holds some influence to its neighbors. 
The control element of this model is introduced by a external magnetic field. 
We represent the external field by a neural network $h^{\bm{\theta}}$ that maps the node index $(i, j)$ to a real number where $\bm{\theta}$ denotes the weights and biases of the neural network network. 
This external magnetic field is usually used to model the effect of a campaign or advertisement over the entire population's opinion formation.
To introduce the time element, we let $\sigma_{(i, j)}(t)$ to be the opinion of individual $(i, j)$ at time step $t$ and let the vector $\boldsymbol{\sigma}(t) = \{\sigma_{(i, j)}(t)\}_{i,j \in \{1,2,\dots,d\}^2}$ denote the current opinions of the entire network at time step $t$.
The goal in this case is to maximize the total opinion in the long run or the stationary expected opinion, $\gamma(\bm{\theta}) := \lim_{N \rightarrow \infty} \frac{1}{N}\sum_{t=0}^{N-1}M(\boldsymbol{\sigma}(t))$ where $M(\boldsymbol{\sigma}(t)) = \sum_{i,j \in \{1,2,\dots,d\}^2}\sigma_{(i, j)}(t)$ is the total opinion or the magnetization of the lattice at time $t$. 
The transition of the system follows the Glauber dynamics \cite{lynn2016maximizing}: at each time $t$, we sample an individual $(i, j)$ uniformly from the entire population and update its state according to
\begin{equation}
    P\left(\sigma_{(i, j)}(t+1)=1 \mid \sigma(t)\right)=\frac{e^{\beta\left(\sum_{(k, l) \in \mathcal{N}_{(i,j)}}\sigma_{(k, l)}(t)+h^{\bm{\theta}}(i, j)\right)}}{e^{-\beta\left(\sum_{(k, l) \in \mathcal{N}_{(i,j)}}\sigma_{(k, l)}(t)+h^{\bm{\theta}}(i, j)\right)} + e^{\beta\left(\sum_{(k, l) \in \mathcal{N}_{(i,j)}}\sigma_{(k, l)}(t)+h^{\bm{\theta}}(i, j)\right)}},
\end{equation}
where $\mathcal{N}_{(i,j)}$ is the set of nodes adjacent to node $(i, j)$ and $\beta$ is the inverse temperature which reflects the overall interaction strength.
The most intuitive way to construct a coupling kernel is by sampling the individuals with the same indices in both lattices and sampling their opinions following the maximum coupling distribution.
Our proposed gradient estimator provides a practical method for estimating $\nabla_{\bm{\theta}} \gamma (\bm{\theta})$ within numerical optimization frameworks aimed at identifying optimal targets for advertisement placement, thus maximizing influence subject to budgetary constraints.
Although our method is capable of estimating the full gradient, we report numerical results specifically for the partial derivative with respect to the final bias term of the neural network, consistent with the approach used in the queuing network control example. 
To evaluate performance, we compare our estimator against the Glynn-L’Ecuyer regeneration estimator across lattice dimensions $d = 2, 3, 4$.
For the regeneration estimator, we selected the regeneration state where $sigma_{(i, j)}=1$ for all indexes $(i, j)$, as alternative regeneration states yielded inferior performance in our trials.
Figure~\ref{fig:ising-lattices} presents the work-variance products of our estimator for varying values of the lagging parameter $L$, in comparison to the Glynn-L’Ecuyer estimator across the dimensions tested. 
The results clearly demonstrate that, as anticipated, our estimator increasingly outperforms the regeneration estimator as $d$ grows, primarily because regenerations become exceedingly rare events in higher-dimensional settings, substantially elevating the regeneration estimator’s work-variance. Consequently, for larger social networks, the performance difference between the two estimators transitions from quantitative to qualitative significance.

\begin{figure}[htbp]
    \centering
    \begin{subfigure}[b]{0.32\textwidth}
        \centering
        \includegraphics[width=\textwidth]{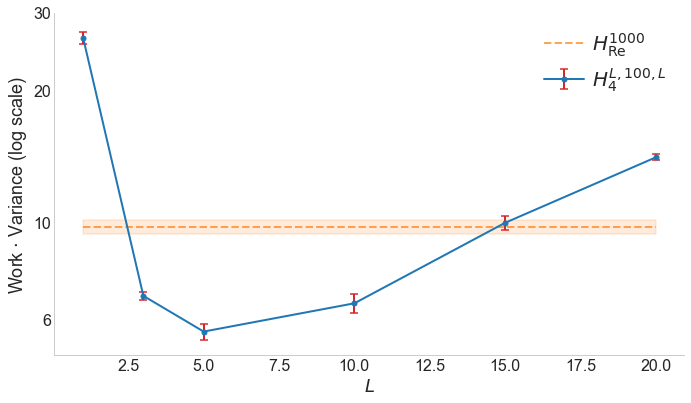}
        \caption{(a)}
        \label{fig:ising-l2}
    \end{subfigure}
    \hfill
    \begin{subfigure}[b]{0.32\textwidth}
        \centering
        \includegraphics[width=\textwidth]{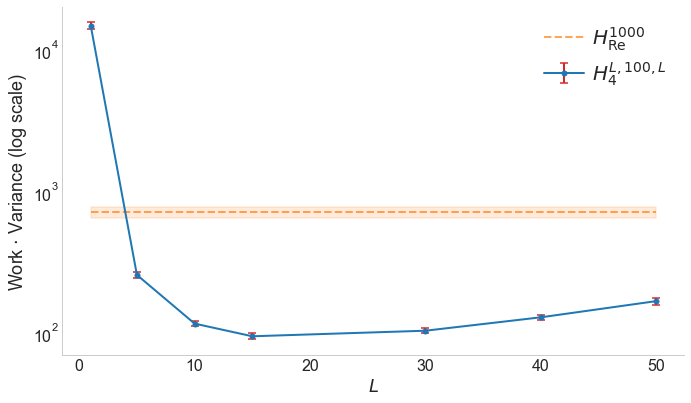}
        \caption{(b)}
        \label{fig:ising-l3}
    \end{subfigure}
    \hfill
    \begin{subfigure}[b]{0.32\textwidth}
        \centering
        \includegraphics[width=\textwidth]{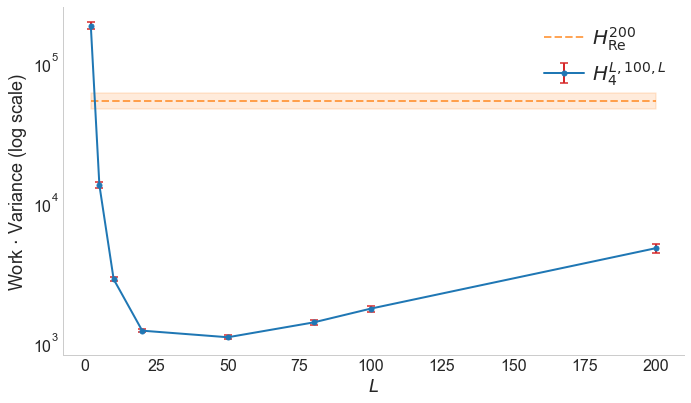}
        \caption{(c)}
        \label{fig:ising-l4}
    \end{subfigure}

    \caption{Work-Variance analysis for (a) $2 \times 2$, (b) $3 \times 3$, and (c) $4 \times 4$ Ising lattices. Each plot shows the comparison between our proposed estimator $H_4^{L,100,L}$ and the Glynn-L'Ecuyer regeneration estimator ($H_{Re}^N$) across different lattice dimensions. The shaded orange band represents the 90\% confidence interval for the regeneration estimator. Our estimator increasingly outperforms the regeneration estimator as the lattice dimension increases, demonstrating significant reductions in the Work-Variance product, especially evident in higher-dimensional lattices.}
    \label{fig:ising-lattices}
\end{figure}

\section{Proofs \jw{Ready to be reviewed}}
\subsection{Proof of Theorem \ref{theorem: L-representation}}\label{sec-proof-thm-L-representation}
\begin{proof}
    First see that for any $i \geq 0$,
    \begin{equation}\label{eq: thm-rep-1}
        \gamma'(\theta_0) = \mathbb{E}^{\theta_0}_{\pi(\theta_0)}\left[p'(\theta_0, X_{i}, X_{i+1})g(X_{i+1})\right],
    \end{equation}
    since the chain starts from stationarity.
    Next we define some notations.
    Define the fundamental solution to the $L$-skeleton chain Poisson's equation to be
    \begin{equation}
        \newnota{g-fu-L}{g^{fu, L}(x)}:= \mathbb{E}_{x}^{\theta_0}\left[\sum_{j=0}^{\infty}\bar{f}(X_{jL})\right],
    \end{equation}
    where $\newnota{bar-f}{\bar{f}(x)} = f(x) - \pi(\theta_0)f$. Note that $g^{fu, L}$
    Then we utilize \eqref{eq: thm-rep-1} to obtain the following equality,
    \begin{equation}
        \begin{aligned}
            L\gamma'(\theta_0) &= \sum_{i=0}^{L-1}\mathbb{E}_{\pi(\theta_0)}\left[p'(\theta_0, X_i, X_{i+1})g^{fu, 1}(X_{i+1})\right]\\
            &= \mathbb{E}_{\pi(\theta_0)}\left[\sum_{i=0}^{L-1}\left(p'(\theta_0, X_i, X_{i+1})Lg^{fu, L}(X_{L})\right)\right]\\
            &\quad + \sum_{i=0}^{L-1}\mathbb{E}_{\pi(\theta_0)}\left[p'(\theta_0, X_i, X_{i+1})\left(g^{fu, 1}(X_{i+1}) - Lg^{fu, L}(X_L)\right)\right].
        \end{aligned}
    \end{equation}
    The only remaining step is to show that the second term equals to $0$.
    Again since the chain starts from stationarity,
    \begin{equation}\label{eq: thm-rep-2}
        \mathbb{E}_{\pi(\theta_0)}\left[p'(\theta_0, X_i, X_{i+1})\left(g^{fu, 1}(X_{i+1}) - Lg^{fu, L}(X_L)\right)\right] = \mathbb{E}_{\pi(\theta_0)}\left[p'(\theta_0, X_0, X_{1})\left(g^{fu, 1}(X_{1}) - Lg^{fu, L}(X_{L-i})\right)\right].
    \end{equation}
    Therefore, 
    \begin{equation}
        \begin{aligned}
            &\sum_{i=0}^{L-1}\mathbb{E}_{\pi(\theta_0)}\left[p'(\theta_0, X_i, X_{i+1})\left(g^{fu, 1}(X_{i+1}) - Lg^{fu, L}(X_L)\right)\right]\\
            &= \mathbb{E}_{\pi(\theta_0)}\left[p'(\theta_0, X_0, X_{1})\left(Lg^{fu, 1}(X_{1}) - \sum_{i=0}^{L-1}Lg^{fu, L}(X_{L-i})\right)\right]\\
            &=\mathbb{E}_{\pi(\theta_0)}\left[p'(\theta_0, X_0, X_{1})L\mathbb{E}^{\theta_0}_{X_1 = X_1}\left[\left(g^{fu, 1}(X_{1}) - \sum_{i=0}^{L-1}g^{fu, L}(X_{L-i})\right)\right]\right]\\
            &= 0
        \end{aligned}
    \end{equation}
    where the first equality is due to \eqref{eq: thm-rep-2}, the second equality is from the tower property by conditioning on $X_1$ and the final equality holds by realizing the following equality:
    \begin{equation}
        \mathbb{E}_{x}^{\theta_0}\left[\sum_{i=0}^{L-1}g^{fu, L}(X_i)\right] = \mathbb{E}_{x}^{\theta_0}\left[\sum_{i=0}^{L-1}\sum_{j=0}^{\infty}\bar{f}(X_{i + jL})\right] = \mathbb{E}_{x}^{\theta_0}\left[\sum_{j=0}^{\infty}\bar{f}(X_{j})\right] = g^{fu, 1}(x).
    \end{equation}
    Hence it completes the proof.
\end{proof}

\subsection{Proof of Lemma \ref{lemma: Gamma}}\label{sec-proof1}
\begin{proof}
    Applying H\"older's inequality and get
    \begin{align}
        \Gamma^{L}_{\zeta}(x) \leq \mathbb{E}_{x}^{\theta_0}\left[G^L_z(X_L)^{2\zeta p}\right]^{\frac{1}{p}}\mathbb{E}_{x}^{\theta_0}\left[\left(\sum_{i=0}^{L-1}p'(\theta_0, X_i, X_{i+1})\right)^{\frac{2\zeta p}{p-1}}\right]^{\frac{p-1}{p}}. \label{eq: breakdown}
    \end{align}
    First we obtain an upper bound for the second term of the right-hand side. 
    Recall that we defined 
    \begin{equation}\label{eq: Mn-proof}
        M_n := \sum_{i=0}^{L-1}p'(\theta_0, x_{i}, x_{i+1})
    \end{equation}
    with $M_0$ = 0. 
    We now show that $\{M_n\}_{n=1,2,\dots}$ is a martingale.
    For fixed states $x, y$ and for all $h \leq \epsilon$,
    \begin{equation}\label{eq: omega-xy}
        \frac{\left|p(\theta_0 + h, x, y) - p(\theta_0, x, y) \right|}{h} \leq \omega_\epsilon(x, y)
    \end{equation}
    and 
    \begin{align}
        \int_{\mathcal{X}}\omega_\epsilon(x, y)P(\theta_0, x, dy) \leq V(x)\left|\Omega(x)\right|_{V} < \infty,
    \end{align}
    where 
    \begin{equation}
        \newnota{Omega-2}{\Omega(x)} = \int_{\mathcal{X}}\omega_{\epsilon}(x, y)P(\theta_0, x, dy),
    \end{equation}
    and the second inequality holds since $\left|\Omega\right|_{V} < \infty$ is implied by $\left|\Omega^{2q}\right|_{V} < \infty$ in Assumption \ref{assumption-moment}.
    Thus
    \begin{align}
        \int_{\mathcal{X}}p'(\theta_0, x, y)P(\theta_0, x, dy) &=\int_{\mathcal{X}}\lim_{h \rightarrow 0} \frac{p(\theta_0 + h, x, y) - p(\theta_0, x, y) }{h}P(\theta_0, x, dy) \\
        &=\lim_{h \rightarrow 0} \int_{\mathcal{X}}\frac{p(\theta_0 + h, x, y) - p(\theta_0, x, y)}{h}P(\theta_0, x, dy)\\
        &=\lim_{h \rightarrow 0} \frac{1}{h} \left(\int_{\mathcal{X}}p(\theta_0 + h, x, y)P(\theta_0, x, dy) - \int_{\mathcal{X}}p(\theta_0, x, y)P(\theta_0, x, dy)\right)\\
        &= 0,
    \end{align}
    where the second equality follows from the dominated convergence theorem and the last equality follows from 
    \begin{equation}
        \int_{\mathcal{X}}p(\theta, x, y)P(\theta_0, x, dy) = \int_{\mathcal{X}}P(\theta, x, dy)= 1,
    \end{equation} 
    for all $\theta$.
    Then
    \begin{align}
        \mathbb{E}^{\theta_0}_{x}\left[p'(\theta_0, X_i, X_{i+1})\right] &= \mathbb{E}^{\theta_0}_{x}\left[\mathbb{E}^{\theta_0}\left[p'(\theta_0, X_i, X_{i+1})| X_i\right]\right] = 0,
    \end{align}
    and thus $M_n$ is a martingale. 
    Furthermore, define
    \begin{align}
        \gamma_{\frac{2\zeta p}{p-1},n} &:= \mathbb{E}_{x}^{\theta_0}\left[\left|p'(\theta_0, X_n, X_{n+1})\right|^{\frac{2\zeta p}{p-1}} \right],
    \end{align}
    and observe that 
    \begin{align}
        \gamma_{\frac{2\zeta p}{p-1},n} &= \mathbb{E}_{x}^{\theta_0}\left[\mathbb{E}^{\theta_0}\left[\left|p'(\theta_0, X_n, X_{n+1})\right|^{\frac{2\zeta p}{p-1}} | X_n\right]\right]\\
        &\leq \mathbb{E}_{x}^{\theta_0}\left[\mathbb{E}^{\theta_0}\left[\omega_{\epsilon}(X_n, X_{n+1})^{\frac{2\zeta p}{p-1}} | X_n\right]\right]\\
        &\leq |\Omega^{\frac{2\zeta p}{p-1}}|_{V} \mathbb{E}_{x}^{\theta_0}\left[V(X_n)\right]\\
        &\leq |\Omega^{\frac{2\zeta p}{p-1}}|_{V} \left(V(x) + \frac{b}{1-\lambda}\right)\\
        &= |\Omega^{\frac{2\zeta p}{p-1}}|_{V} \left(1 + \frac{b}{1-\lambda}\right)V(x),
    \end{align}
    where the first inequality holds because \eqref{eq: omega-xy}; the second inequality holds due to Assumption \ref{assumption-moment} with $\zeta \leq \kappa$; and the third inequality holds since \eqref{eq: V-bar}. 
    Thus by Theorem 1 in \cite{dharmadhikari1968bounds}, we obtain an upper bound on the second term of \eqref{eq: breakdown}:
    \begin{equation}\label{eq: bound-1}
        \mathbb{E}\left[|M_L|^{\frac{2\zeta p}{p-1}}\right]^{\frac{p-1}{p}} \leq L^{\zeta} \left(C_{\frac{2\zeta p}{p-1}}|\Omega^{\frac{2\zeta p}{p-1}}|_{V} \left(1 + \frac{b}{1-\lambda}\right)V(x)\right)^{\frac{p-1}{ p}}
    \end{equation}
    where $\newnota{C-kappa}{C_{l}} := \left[8(l-1) \max(1, 2^{l-3})\right]^{l}$.
    Now we look at the first term in \eqref{eq: breakdown}.
    We define $(X_{L+t}, Z_t)_{t\in \mathbb{N}}$ to be a coupled Markov chains that evolves according to $\Bar{P}(\theta_0)$ with $X_L \sim P^L(\theta_0, x, \cdot)$ and $Z_0 = z$. 
    Then see that
    \begin{equation}\label{eq: lemma-bound-gz}
    \begin{aligned}
        \mathbb{E}_{x}\left[G^L_z(X_L)^{2\zeta p}\right]^{\frac{1}{2\zeta p}}
        &= \mathbb{E}_{x, z}^{\theta_0}\left[\left(\sum_{t=0}^{\infty}\left(\left(f(X_{(t+1)L}) - f(Z_{tL})\right)\mathbbm{1}\left(X_{(t+1)L} \neq Z_{tL}\right)\right)^{2\zeta p}\right)\right]^{\frac{1
        }{2\zeta p}}\\
        &\leq \sum_{t=0}^{\infty}\Big(\mathbb{E}_{x, z}^{\theta_0}\left[f(X_{(t+1)L})^{2\zeta p}\mathbbm{1}\left(X_{(t+1)L} \neq Z_{tL}\right)\right]^{\frac{1}{2\zeta p}}\\
        &\quad +\mathbb{E}_{x, z}^{\theta_0}\left[f(Z_{tL})^{2\zeta p}\mathbbm{1}\left(X_{(t+1)L} \neq Z_{tL}\right)\right]^{\frac{1}{2\zeta p}}\Big)\\
        &\leq \sum_{t=0}^{\infty}\Big(\mathbb{E}_{x, z}^{\theta_0}\left[f(X_{(t+1)L})^{2\zeta p+\delta}\right]^{\frac{1}{2\zeta p+\delta}}\\
        &\quad +\mathbb{E}_{x, z}^{\theta_0}\left[f(Z_{tL})^{2\zeta p+\delta}\right]^{\frac{1}{2\zeta p+\delta}}\Big)P_{x}\left(\tau_{X_L, z} > tL\right)^{\frac{\delta}{2\zeta p(2\zeta p+\delta)}}\\
        &\leq \left|f^{2\zeta p+\delta}\right|_{V}^{\frac{1}{2\zeta p+\delta}}\left(\Bar{V}(x)^{\frac{1}{2\zeta p+\delta}}+\Bar{V}(z)^{\frac{1}{2\zeta p+\delta}}\right)\sum_{t=0}^{\infty}P_{x}\left(\tau_{X_L, z} > tL\right)^{\frac{\delta}{2\zeta p(2\zeta p+\delta)}}\\
        &\leq \left|f^{2\zeta p+\delta}\right|_{V}^{\frac{1}{2\zeta p+\delta}}\left(\Bar{V}(x)^{\frac{1}{2\zeta p+\delta}}+\Bar{V}(z)^{\frac{1}{2\zeta p+\delta}}\right)\\
        &\quad \cdot \left(1 + \left(M\Bar{V}(x)+MV(z)\right)^{\frac{\delta}{2\zeta p(2\zeta p+\delta)}}\frac{\left(\rho^{\frac{\delta}{2\zeta p(2\zeta p+\delta)}}\right)^L}{1-\left(\rho^{\frac{\delta}{2\zeta p(2\zeta p+\delta)}}\right)^L}\right)
    \end{aligned}
    \end{equation}
    where the first inequality is due to Minkowski's inequality; the second inequality is from H\"older's inequality; the third inequality holds because Assumption \ref{assumption-moment} and \ref{assumption: drift}, specifically
    \begin{equation}
        \mathbb{E}_{x, z}^{\theta_0}\left[f(X_{tL})^{2\zeta p + \delta}\right] \leq \left|f^{2\zeta p+\delta}\right|_{V}^{\frac{1}{2\zeta p+\delta}}\mathbb{E}_{x, z}^{\theta_0}\left[V(X_{tL})\right] \leq \left|f^{2\zeta p+\delta}\right|_{V}^{\frac{1}{2\zeta p+\delta}}\Bar{V}(x);
    \end{equation}
    and the forth inequality holds if we apply Assumption \ref{assumption-coupling} and obtain
    \begin{align}
        P_{x}\left(\tau_{X_L, z} > tL\right) &= \int_{\mathcal{X}} P\left(\tau_{y, z} > tL\right) P^L(\theta_0, x, dy)\\
        &\leq \int_{\mathcal{X}} M\left(V(y) + V(z)\right)\rho^{tL} P^L(\theta_0, x, dy)\\
        &\leq M\left(\Bar{V}(x) + V(z)\right)\rho^{tL}.
    \end{align}
    Therefore by raising both sides to the power $2\zeta$, we obtain
    \begin{align}
        \mathbb{E}_{x}\left[G^L_z(X_L)^{2\zeta p}\right]^{\frac{1}{p}} &\leq 2^{4\zeta - 2}\left|f^{2\zeta p+\delta}\right|_{V}^{\frac{2\zeta}{2\zeta p+\delta}}\left(\Bar{V}(x)^{\frac{2\zeta}{2\zeta p+\delta}}+\Bar{V}(z)^{\frac{2\zeta}{2\zeta p+\delta}}\right)\nonumber\\
        &\quad \cdot \left(1 + M^{\frac{2\zeta\delta}{2\zeta p(2\zeta p+\delta)}}\left(\Bar{V}(x)^{\frac{2\zeta\delta}{2\zeta p(2\zeta p+\delta)}}+V(z)^{\frac{2\zeta\delta}{2\zeta p(2\zeta p+\delta)}}\right)\left(\frac{\left(\rho^{\frac{\delta}{2\zeta p(2\zeta p+\delta)}}\right)^L}{1-\left(\rho^{\frac{\delta}{2\zeta p(2\zeta p+\delta)}}\right)^L}\right)^{2\zeta}\right).\label{eq: bound-2}
    \end{align}
    where we use the inequality $(x + y)^{2\zeta} \leq 2^{2\zeta - 1}(x^{2\zeta} + y^{2\zeta})$.
    To emphasize the dependence of the starting state $x$, we rewrite the upper bound to be
    \begin{align}
        \mathbb{E}_{x}\left[G^L_z(X_L)^{2\zeta p}\right]^{\frac{1}{p}} &\leq V(x)^{\frac{1}{p}}2^{4\zeta - 2}\left|f^{2\zeta p+\delta}\right|_{V}^{\frac{2\zeta}{2\zeta p+\delta}}\left(1 +\left(\frac{b}{1-\lambda}\right)^{\frac{2\zeta}{2\zeta p+\delta}}+\Bar{V}(z)^{\frac{2\zeta}{2\zeta p+\delta}}\right)\nonumber\\
        &\quad \cdot \left(1 + M^{\frac{2\zeta\delta}{2\zeta p(2\zeta p+\delta)}}\left(1 + \left(\frac{b}{1-\lambda}\right)^{\frac{2\zeta\delta}{2\zeta p(2\zeta p+\delta)}}+V(z)^{\frac{2\zeta\delta}{2\zeta p(2\zeta p+\delta)}}\right)\left(\frac{\left(\rho^{\frac{\delta}{2\zeta p(2\zeta p+\delta)}}\right)^L}{1-\left(\rho^{\frac{\delta}{2\zeta p(2\zeta p+\delta)}}\right)^L}\right)^{2\zeta}\right).
    \end{align}
    Thus combining it with the bound on the martingale term \eqref{eq: bound-1}, we obtain that
    \begin{equation}
        \Gamma_{\zeta}^L(x) \leq V(x)\left(L^{\zeta}A_{z} + L^{\zeta}\left(\frac{\left(\rho^{\frac{\delta}{2\zeta p(2\zeta p+\delta)}}\right)^L}{1-\left(\rho^{\frac{\delta}{2\zeta p(2\zeta p+\delta)}}\right)^L}\right)^{2\zeta}B_{z}\right):= V(x)U_{\zeta, z}(L)
    \end{equation}
    where
    \begin{align}
        A_z^{\zeta} &:= 2^{4\zeta - 2}\left|f^{2\zeta p+\delta}\right|_{V}^{\frac{2\zeta}{2\zeta p+\delta}}\left(C_{\frac{2\zeta p}{p-1}}|\Omega^{\frac{2\zeta p}{p-1}}|_{V} \left(1 + \frac{b}{1-\lambda}\right)\right)^{\frac{p-1}{ p}}\left(1 +\left(\frac{b}{1-\lambda}\right)^{\frac{2\zeta}{2\zeta p+\delta}}+\Bar{V}(z)^{\frac{2\zeta}{2\zeta p+\delta}}\right);\\
        B_z^{\zeta} &:= A_z^{\zeta} \cdot M^{\frac{2\zeta\delta}{2\zeta p(2\zeta p+\delta)}}\left(1 + \left(\frac{b}{1-\lambda}\right)^{\frac{2\zeta\delta}{2\zeta p(2\zeta p+\delta)}}+V(z)^{\frac{2\zeta\delta}{2\zeta p(2\zeta p+\delta)}}\right)
    \end{align}
    do not depend on the starting state $x$ or $L$.
    Therefore,
    \begin{equation}
        \frac{\Gamma_{\zeta}^L(x)}{V(x)} \leq U_{\zeta, z}(L)
    \end{equation}
    for all $x \in \mathcal{X}$ and hence $\left|\Gamma_{\zeta}^{L}\right|_V \leq U_{\zeta, z}(L) < \infty$.
\end{proof}

\subsection{Proof of Theorem \ref{theorem: H2}}\label{sec-proof2}
\begin{proof}
    Denote the starting state as $x$, first observe that
    \begin{equation}\label{eq: thm-bd-1}
    \begin{aligned}
        \mathbb{E}_{\mu_0}^{\theta_0}\left[H\left(\{X_i\}_{i=k+tL}^{k+(t+1)L}\right)^2 \mathbbm{1}\left(X_{k+tL} \neq Y_{k+(t-1)L}\right)\right]^{\frac{1}{2}} &\leq \mathbb{E}_{\mu_0}^{\theta_0}\left[H\left(\{X_i\}_{i=k+tL}^{k+(t+1)L}\right)^{2\kappa}\right]^{\frac{1}{2\kappa}}P_{x}\left(\tau^L > k + tL\right)^{\frac{\kappa - 1}{2\kappa}} \\
        &= \mathbb{E}_{\mu_0}^{\theta_0}\left[\Gamma^{L}_{\kappa}(X_{k+tL})\right]^{\frac{1}{2\kappa}}P_{x}\left(\tau^L > k + tL\right)^{\frac{\kappa - 1}{2\kappa}} \\
        &\leq \left|\Gamma^{L}_{\kappa}\right|_{V}^{\frac{1}{2\kappa}}\mathbb{E}_{\mu_0}^{\theta_0}\left[V(X_{k+tL})\right]^{\frac{1}{2\kappa}}P_{x}\left(\tau^L > k + tL\right)^{\frac{\kappa - 1}{2\kappa}} \\
        &\leq \left|\Gamma^{L}_{\kappa}\right|_{V}^{\frac{1}{2\kappa}}\mu_0\Bar{V}^{\frac{1}{2\kappa}}P_{x}\left(\tau^L > k + tL\right)^{\frac{\kappa - 1}{2\kappa}},
    \end{aligned}
    \end{equation}
    where the first inequality is due to H\"older's inequality; the second equality is by the tower property; the second inequality follows from Lemma \ref{lemma: Gamma} and the last inequality follows from \eqref{eq: V-bar}. 
    The argument holds similarly for chain $Y$.
    Next we bound the tail probability of the coupling time.
    \begin{equation}\label{eq: thm-bd-2}
    \begin{aligned}
        P_{x}\left(\tau^L > k + tL\right) &= \int_{\mathcal{X}}\int_{\mathcal{X}} P_{x}\left(\tau_{x, y} > k + (t - 1)L\right) P^L(\theta_0, x, dy)\mu_{0}(dx)\\
        &\leq \int_{\mathcal{X}}\int_{\mathcal{X}} M\left(V(x) + V(y)\right)\rho^{k + (t - 1)L} P^L(\theta_0, x, dy)\mu_{0}(dx)\\
        &\leq M\left(\mu_{0}V + \mu_{0}\Bar{V}\right)\rho^{k + (t - 1)L},
    \end{aligned}
    \end{equation}
    where the first inequality follows from \eqref{eq: A-tail}.
    Now we are equipped to show that $H_{2}^{k,L}(X, Y)$ has finite second moment.
    \begin{equation}
    \begin{aligned}
        \mathbb{E}_{\mu_0}^{\theta_0}\left[H_{2}^{k,L}(X, Y)^2\right]^{\frac{1}{2}} &\leq \mathbb{E}_{\mu_0}^{\theta_0}\left[H\left(\{X_i\}_{i=k}^{k+L}\right)^{2}\right]^{\frac{1}{2}}  + \sum_{t=1}^{\infty} \Big(\mathbb{E}_{\mu_0}^{\theta_0}\left[H\left(\{X_i\}_{i=k+tL}^{k+(t+1)L}\right)^2 \mathbbm{1}\left(X_{k+tL} \neq Y_{k+(t-1)L}\right)\right]^{\frac{1}{2}}\\
        &\quad +  \mathbb{E}_{\mu_0}^{\theta_0}\left[H\left(\{Y_i\}_{i=k+(t-1)L}^{k+tL}\right)^2 \mathbbm{1}\left(X_{k+tL} \neq Y_{k+(t-1)L}\right)\right]^{\frac{1}{2}}\Big)\\
        &\leq \left|\Gamma^{L}_{\kappa}\right|_{V}^{\frac{1}{2\kappa}}\mu_0\Bar{V}^{\frac{1}{2\kappa}} + 2\left|\Gamma^{L}_{\kappa}\right|_{V}^{\frac{1}{2\kappa}}\mu_0\Bar{V}^{\frac{1}{2\kappa}}\sum_{t=1}^{\infty} P_{x}\left(\tau^L > k + tL\right)^{\frac{\kappa - 1}{2\kappa}}\\
        &\leq \left|\Gamma^{L}_{\kappa}\right|_{V}^{\frac{1}{2\kappa}}\mu_0\Bar{V}^{\frac{1}{2\kappa}} + 2\left|\Gamma^{L}_{\kappa}\right|_{V}^{\frac{1}{2\kappa}}\mu_0\Bar{V}^{\frac{1}{2\kappa}}M^{\frac{\kappa - 1}{2\kappa}}\left(\mu_{0}V + \mu_{0}\Bar{V}\right)^{\frac{\kappa - 1}{2\kappa}}\left(\rho^{\frac{\kappa - 1}{2\kappa}}\right)^{k}\sum_{t=0}^{\infty}\left(\rho^{\frac{\kappa - 1}{2\kappa}}\right)^{tL},
    \end{aligned}
    \end{equation}
    where the first inequality follows from Minkowski's inequality; the second inequality follows from \eqref{eq: thm-bd-1} and the third inequality follows from \eqref{eq: thm-bd-2}.
    Also observe that the second term of the upper bound is actually an upper bound for $\mathbb{E}_{\mu_0}^{\theta_0}\left[\left(BC_{k}^L\right)^2\right]^{1/2}$.
    Since $\left(\rho^{\frac{\kappa - 1}{2\kappa}}\right)^{k}$ goes to $0$ as $k$ increases to $\infty$ and the other terms are constants, we may conclude \eqref{eq: thm-bc}:
    \begin{equation}
        \mathbb{E}_{\mu_0}^{\theta_0}\left[\left(BC_{k}^L\right)^2\right] \rightarrow 0
    \end{equation}
    as $k \rightarrow \infty$.
    Furthermore, see that 
    \begin{equation}
        \mathbb{E}_{\mu_0}^{\theta_0}\left[\left(SE_{k}^L\right)^2\right] = \mathbb{E}_{\mu_0}^{\theta_0}\left[H\left(\{X_i\}_{i=k}^{k+L}\right)^{2}\right] = \mathbb{E}_{\mu_0}^{\theta_0}\left[\Gamma^{L}_{\frac{1}{2}}(X_k)\right]
    \end{equation}
    and $\left|\Gamma^{L}_{1}\right|_{V} < \infty$ is implied by Lemma \ref{lemma: Gamma}. 
    Therefore,
    \begin{equation}
        \left|\mathbb{E}_{\mu_0}^{\theta_0}\left[\Gamma^{L}_{1}(X_k)\right] - \pi(\theta_0)\Gamma^{L}_{1}\right| \leq \left|\Gamma^{L}_{1}\right|_{V}M(\mu_0 V) \rho^k,
    \end{equation}
    where the upper bound goes to $0$ as $k$ increases to infinity.
    Hence we showed \eqref{eq: thm-se}.
    To prove $H_{2}^{k,L}(X, Y)$ is unbiased, first realize that 
    \begin{equation}
        \mathbb{E}_{\mu_0}^{\theta_0}\left[H\left(\{X_i\}_{i=k}^{k+L}\right)\right] = \mathbb{E}_{\mu_0}^{\theta_0}\left[h(X_k)\right].
    \end{equation}
    Then using the fact that $H_{2}^{k,L}(X, Y)$ has finite second moment, we can guarantee that it has finite first moment and thus we may invoke the Fubini's theorem to guarantee the validity of the interchange of the summation and expectation from \eqref{eq: begin-g} to \eqref{eq: end-g} and conclude that $H_{2}^{k,L}(X, Y)$ is indeed an unbiased estimator. 
    The finite computation time is implied by the geometrically decaying tail probability of the coupling time made in Assumption \ref{assumption-coupling}.
\end{proof}

\subsection{Proof of Lemma \ref{lemma: efficiency}}\label{sec: proof-lemma3}
\begin{proof}
    First we bound the variance $\sigma_L^2$ by its second moment:
    \begin{equation}\label{eq: theorem2-bound1}
        \sigma_{L}^2 \leq \pi(\theta_0)\Gamma_1^L \leq \pi(\theta_0)V |\Gamma_1^L|_{V}
    \end{equation}
    Now we turn to obtain an upper bound for the covariance term. 
    To ease the notations, we let
    \begin{equation}
        \newnota{HLj}{H^L(j)} := H\left(\{X_i\}_{i=j}^{j + L}\right).
    \end{equation}
    Observe that
    \begin{equation}\label{eq: H0HjL}
    \begin{aligned}
        \mathbb{E}_{\pi(\theta_0)}^{\theta_0}\left[H^L(0) H^L(jL)\right] &= \mathbb{E}_{\pi(\theta_0)}^{\theta_0}\left[H^L(0)\mathbb{E}_{X_L}^{\theta_0}\left[H^L(jL)\right]\right]\\
        &=\mathbb{E}_{\pi(\theta_0)}^{\theta_0}\left[H^L(0)\left(\gamma'(\theta_0) + \mathbb{E}_{X_L}^{\theta_0}\left[H^L(jL)\right] - \gamma'(\theta_0)\right)\right]\\
        &= \gamma'(\theta_0)^2 + \mathbb{E}_{\pi(\theta_0)}^{\theta_0}\left[H^L(0)\left(\mathbb{E}_{X_L}^{\theta_0}\left[H^L(jL)\right] - \gamma'(\theta_0)\right)\right]\\
        &\leq \gamma'(\theta_0)^2 + \mathbb{E}_{\pi(\theta_0)}^{\theta_0}\left[H^L(0)^2\right]^{\frac{1}{2}}\mathbb{E}_{\pi(\theta_0)}^{\theta_0}\left[\left(\mathbb{E}_{X_L}^{\theta_0}\left[H^L(jL)\right] - \gamma'(\theta_0)\right)^{2}\right]^{\frac{1}{2}}.
    \end{aligned}
    \end{equation}
    Now note that since $j \geq 1$,
    \begin{align}
        \mathbb{E}_{x}^{\theta_0}\left[H^L(jL)\right] &= \mathbb{E}_{x}^{\theta_0}\left[\Gamma^{L}_{\frac{1}{2}}\left(X_{(j-1)L}\right)\right],\\
        \gamma'(\theta_0) &= \pi(\theta_0)\Gamma^{L}_{\frac{1}{2}},
    \end{align}
    and by Jensen's inequality,
    \begin{equation}
       |\Gamma^{L}_{\frac{1}{2}}(x)|^2 \leq |\Gamma^{L}_{1}(x)| \leq \left|\Gamma^{L}_{1}\right|V(x).
    \end{equation}
    Thus
    \begin{equation}\label{eq: H0HjL-2}
        \left|\mathbb{E}_{x}^{\theta_0}\left[H^L(jL)\right] - \gamma'(\theta_0)\right| \leq \left|\Gamma^{L}_{1}\right|^{\frac{1}{2}}_{V}M\sqrt{V(x)}\rho^{(j-1)L}
    \end{equation}
    Therefore,
    \begin{align}
        \sigma_{j, L} &= \mathbb{E}_{\pi(\theta_0)}^{\theta_0}\left[H^L(0) H^L(jL)\right] - \gamma'(\theta_0)^2\\
        &\leq \mathbb{E}_{\pi(\theta_0)}^{\theta_0}\left[H^L(0)^2\right]^{\frac{1}{2}}\mathbb{E}_{\pi(\theta_0)}^{\theta_0}\left[\left(\mathbb{E}_{X_L}^{\theta_0}\left[H^L(jL)\right] - \gamma'(\theta_0)\right)^{2}\right]^{\frac{1}{2}}\\
        &\leq \left(\pi(\theta_0)\Gamma^{L}_{1}\right)^{\frac{1}{2}} \left(\pi(\theta_0)V\right)^{\frac{1}{2}}\left|\Gamma^{L}_{1}\right|^{\frac{1}{2}}_{V}M\rho^{(j-1)L}\\
        &\leq \left(\pi(\theta_0)V\right)\left|\Gamma^{L}_{1}\right|_{V}M\rho^{(j-1)L}
    \end{align}
    where the first inequality follows from \eqref{eq: H0HjL} and the second inequality follows from realizing that $\mathbb{E}_{\pi(\theta_0)}^{\theta_0}\left[H^L(0)^2\right] = \pi(\theta_0)\Gamma^{L}_{1}$ and inequality \eqref{eq: H0HjL-2}. 
    Thus we can derive an upper bound for the sum of the covariance terms 
    \begin{equation}\label{eq: theorem2-bound2}
        \begin{aligned}
            \sum_{j=1}^{\infty} \sigma_{j, L} &\leq \sigma_{1, L} + \sum_{j=2}^{\infty} \sigma_{j, L} \leq \sigma_L^2 + \left(\pi(\theta_0)V\right)\left|\Gamma^{L}_{1}\right|_{V}\frac{M\rho^L}{1-\rho^L}\\
        \end{aligned}
    \end{equation}
    Finally we combine the upper bounds for variance \eqref{eq: theorem2-bound1} and covariance terms \eqref{eq: theorem2-bound2} and invoke Lemma \ref{lemma: Gamma} to get the upper bound for the asymptotic variance:
    \begin{equation}
        \sigma_L^2 + 2\sum_{j=1}^{\infty} \sigma_{j, L} \leq U_{1, z}(L)\pi(\theta_0)V\left(3 + \frac{2M\rho^L}{1-\rho^L}\right)
    \end{equation}
\end{proof}

\subsection{Proof of Theorem \ref{theorem: efficiency}}\label{sec: proof-theorem4}
\begin{proof}
    Note that
    \begin{equation}
        \frac{\frac{2M\rho^L}{1-\rho^L}}{\frac{2M\rho}{1-\rho}} \leq 1,
    \end{equation}
    for all $L \geq 1$. 
    Thus
    \begin{equation}
        \frac{W(1)}{W(L)} \geq \frac{\left(1 + 2\mathbb{E}[\tau_{\pi(\theta_0), z}]\right)U_{1, z}(1)}{\left(L + 2\mathbb{E}[\tau_{\pi(\theta_0), z}]\right)U_{1, z}(L)},
    \end{equation}
    and our goal is to develop a lower bound for the right hand side. 
    Before doing so, we make obvious some useful inequalities. 
    First see that
    \begin{equation}
    \begin{aligned}
    \mathbb{E}\left[\tau_{\pi(\theta_0),z}\right] - 1 &\leq \sum_{t=1}^\infty P\left(\tau_{\pi(\theta_0), z} > t\right)\\
    &\leq \sum_{t=1}^\infty \left(P\left(\tau_{\pi(\theta_0), z} > t\right)\right)^{\frac{\delta}{2 p(2 p+\delta)}}\\
    &\leq M^{\frac{\delta}{2 p(2 p+\delta)}} \left(\pi(\theta_0)V + V(z)\right)^{\frac{\delta}{2 p(2 p+\delta)}}\frac{\rho^{\frac{\delta}{2 p(2 p+\delta)}}}{1-\rho^{\frac{\delta}{2 p(2 p+\delta)}}},
    \end{aligned}
    \end{equation}
    where the last inequality follows from \eqref{eq: Etau}. 
    Rearrange the inequality to get
    \begin{equation}
        \begin{aligned}
            \left(\frac{\rho^{\frac{\delta}{2 p(2 p+\delta)}}}{1-\rho^{\frac{\delta}{2 p(2 p+\delta)}}}\right)^{-2} &\leq \left(\mathbb{E}\left[\tau_{\pi(\theta_0),z}\right] - 1\right)^{-2}M^{\frac{2\delta}{2 p(2 p+\delta)}} \left(\pi(\theta_0)V + V(z)\right)^{\frac{2\delta}{2 p(2 p+\delta)}}\\
            &\leq \left(\mathbb{E}\left[\tau_{\pi(\theta_0),z}\right] - 1\right)^{-2}M^{\frac{2\delta}{2 p(2 p+\delta)}} \left(\pi(\theta_0)V^{\frac{2\delta}{2 p(2 p+\delta)}} + V(z)^{\frac{2\delta}{2 p(2 p+\delta)}}\right)\\
            &\leq \left(\mathbb{E}\left[\tau_{\pi(\theta_0),z}\right] - 1\right)^{-2}M^{\frac{2\delta}{2 p(2 p+\delta)}} \left(\left(\frac{b}{1-\lambda}\right)^{\frac{2\delta}{2 p(2 p+\delta)}} + V(z)^{\frac{2\delta}{2 p(2 p+\delta)}}\right),
        \end{aligned}
    \end{equation}
    where the last inequality follows from that for any $x \in \mathcal{X}$:
    \begin{equation}
        \pi(\theta_0)V = \lim_{n \rightarrow \infty} \mathbb{E}_x^{\theta_0}\left[V(X_n)\right] \leq \lim_{n \rightarrow \infty} \lambda^n V(x) + \frac{b}{1-\lambda} = \frac{b}{1-\lambda}.
    \end{equation}
    Now recall that
    \begin{equation}
        \frac{A_z^{1}}{B_z^{1}} = M^{-\frac{2\delta}{2 p(2 p+\delta)}}\left(1 + \left(\frac{b}{1-\lambda}\right)^{\frac{2\delta}{2 p(2 p+\delta)}}+V(z)^{\frac{2\delta}{2 p(2 p+\delta)}}\right)^{-1},
    \end{equation}
    and we now have the following inequality:
    \begin{equation}\label{eq: theorem-ineq}
        \frac{A_z^{1}}{B_z^{1}}\left(\frac{\rho^{\frac{\delta}{2 p(2 p+\delta)}}}{1-\rho^{\frac{\delta}{2 p(2 p+\delta)}}}\right)^{-2} \leq \left(\mathbb{E}\left[\tau_{\pi(\theta_0),z}\right] - 1\right)^{-2}.
    \end{equation}
    Now we go back to developing the lower bound for the performance ratio:
    \begin{equation}
    \begin{aligned}
        \frac{\left(1 + 2\mathbb{E}[\tau_{\pi(\theta_0), z}]\right)U_{1, z}(1)}{\left(L + 2\mathbb{E}[\tau_{\pi(\theta_0), z}]\right)U_{1, z}(L)} &= \frac{\left(2\mathbb{E}\left[\tau_{\pi(\theta_0),z}\right] + 1\right)\left(A_z^{1} + \left(\frac{\rho^{\frac{\delta}{2p(2p+\delta)}}}{1-\rho^{\frac{\delta}{2p(2p+\delta)}}}\right)^2B_z^{1}\right)}{\left(2\mathbb{E}\left[\tau_{\pi(\theta_0),z}\right] + L\right)\left(LA_z^{1} + L\left(\frac{\left(\rho^{\frac{\delta}{2p(2p+\delta)}}\right)^L}{1-\left(\rho^{\frac{\delta}{2p(2p+\delta)}}\right)^L}\right)^2B_z^{1}\right)}\\
        &\geq  \frac{2\mathbb{E}\left[\tau_{\pi(\theta_0),z}\right]\left(\frac{\rho^{\frac{\delta}{2p(2p+\delta)}}}{1-\rho^{\frac{\delta}{2p(2p+\delta)}}}\right)^2B_z^{1}}{\left(2\mathbb{E}\left[\tau_{\pi(\theta_0),z}\right] + L\right)\left(LA_z^{1} + \frac{1}{L}\left(\frac{\rho^{\frac{\delta}{2p(2p+\delta)}}}{1-\rho^{\frac{\delta}{2p(2p+\delta)}}}\right)^2B_z^{1}\right)}\\
        &= \frac{1}{\left(1 + \frac{L}{2\mathbb{E}\left[\tau_{\pi(\theta_0),z}\right]}\right)\left(L\left(\frac{\rho^{\frac{\delta}{2p(2p+\delta)}}}{1-\rho^{\frac{\delta}{2p(2p+\delta)}}}\right)^{-2} \frac{A_z^{1}}{B_z^{1}} + \frac{1}{L}\right)}\\
        &\geq \frac{1}{\left(1 + \frac{L}{2\mathbb{E}\left[\tau_{\pi(\theta_0),z}\right]}\right)\left(L\frac{1}{\left(\mathbb{E}\left[\tau_{\pi(\theta_0),z}\right] - 1\right)^2} + \frac{1}{L}\right)},
    \end{aligned}
    \end{equation}
    where the first inequality follows from the fact that $\frac{h^x}{1-h^x} <= \frac{1}{x}\frac{h}{1-h}$ for any $0 < h < 1$ and $x \geq 1$ and the last inequality follows from \eqref{eq: theorem-ineq}.
    Finally if we let $L = \mathbb{E}\left[\tau_{\pi(\theta_0),z}\right] - 1$, then we observe
    \begin{equation}
        \frac{W(1)}{W(\mathbb{E}\left[\tau_{\pi(\theta_0),z}\right]-1)} > \frac{1}{3}\mathbb{E}\left[\tau_{\pi(\theta_0),z}-1\right].
    \end{equation}
    which completes the proof.
\end{proof}

\section{Tables}
\begin{table}[h]
\centering
\begin{tabular}{|c||c||c||c|}
\hline
     N &  Work &  Work$\cdot$Variance ($10^6$) &  Estimate (True Value:-24.963)\\ 
\hline
    100 & $5,235\pm24$ & $66.90\pm9.88$ & $-37.14\pm1.07$\\ 
\hline
    200 & $10,404\pm28$ & $47.74\pm9.43$ & $-30.90\pm0.53$\\ 
\hline
    500 & $26,151\pm160$ & $33.60\pm7.46$ & $-27.28\pm0.99$ \\ 
\hline
    1000 & $51,943\pm77$ & $30.15\pm1.58$ & $-26.18\pm0.23$ \\ 
\hline
    2000 & $104,019\pm150$ & $27.54\pm1.80 $ & $-25.47\pm0.21$  \\ 
\hline
    5000 & $259,609\pm395$ & $25.98\pm1.42$ &  $-25.00\pm0.21$ \\ 
\hline
    10000 & $520,093\pm457$ & $25.27\pm0.92$ & $-24.97\pm0.07$  \\ 
\hline
    40000 & $2,080,745\pm1147$ & $26.20\pm0.83$ & $-25.10\pm0.09$   \\ 
\hline
\end{tabular}
\caption{Performance stats of $H^N_{IPA}$ with different $N$}
\label{tab:performance}
\end{table}

\begin{table}[h]
\centering
\begin{tabular}{|c||c||c||c|}
\hline
     N &  Work &  Work$\cdot$Variance ($10^6$) &  Estimate (True Value:-24.963)\\ 
\hline
    100 & $186071\pm4646$ & $38.17\pm2.37$ & $-18.16\pm0.30$\\ 
\hline
    200 & $382259\pm6929$ & $63.03\pm4.85$ & $-21.15\pm0.29$\\ 
\hline
    500 & $943558\pm10339$ & $88.74\pm5.23$ & $-22.92\pm0.20$ \\ 
\hline
    1000 & $1896955\pm13497$ & $112.83\pm5.90$ & $-23.97\pm0.15$ \\ 
\hline
    2000 & $3755492\pm27625$ & $115.11\pm6.87 $ & $-24.38\pm0.16$  \\ 
\hline
    3000 & $5643306\pm28646$ & $132.32\pm8.47$ & $-24.56\pm0.15$ \\ 
\hline
\end{tabular}
\caption{Performance stats of $H^N_{Ph}$ with different $N$}
\label{tab:performance-ph}
\end{table}

\begin{table}[h]
\centering
\begin{tabular}{|c||c||c||c|}
\hline
     N &  Work ($10^4$)&  Work $\cdot$Variance ($10^6$) &  Estimate (True Value:-24.963)\\ 
\hline
    1000 & $2.59\pm0.001$ & $15.33\pm0.36$ & $-18.40\pm0.07$\\ 
\hline
    2000 & $5.19\pm0.006$ & $28.60\pm2.25$ & $-21.28\pm0.18$\\ 
\hline
    3000 & $7.80\pm0.005$ & $34.86\pm1.50$ & $-22.38\pm0.11$ \\ 
\hline
    5000 & $12.97\pm0.03$ & $39.42\pm2.01$ & $-22.75\pm0.42$ \\ 
\hline
    10000 & $25.99\pm0.02$ & $45.23\pm1.55$ & $-24.02\pm0.08$  \\ 
\hline
    20000 & $52.02\pm0.03$ & $51.37\pm2.88$ &  $-24.54\pm0.14$ \\ 
\hline
    40000 & $104.00\pm0.05$  & $57.30\pm3.66$ & $-24.86\pm0.04$  \\ 
\hline
\end{tabular}
\caption{Performance stats of $H^N_{Re}$ with different $N$}
\label{tab:performance-lr}
\end{table}

\begin{table}[h]
\centering
\begin{tabular}{|c||c||c||c|}
\hline
     N  &  Work$\cdot$Variance ($10^6$) &  Estimate (True Value:-24.963)\\ 
\hline
    2000  & $0.60\pm0.009$ & $-10.19\pm0.03$\\ 
\hline
    5000 & $4.64\pm0.07$ & $-16.33\pm0.05$ \\ 
\hline
    10000  &$14.41\pm0.32$ & $-20.17\pm0.08$\\ 
\hline
    20000 & $31.61\pm1.28$ & $-22.57\pm0.12$ \\ 
\hline
    50000 & $64.02\pm3.32$ & $-23.98\pm0.21$  \\ 
\hline
    100000 & $114.23\pm3.90$ &  $-24.62\pm0.18$ \\ 
\hline
    500000   & $479.88\pm28.82$ & $-25.26\pm0.38$  \\ 
\hline
\end{tabular}
\caption{Performance stats of $H^N_{LR}$ with different $N$}
\label{tab:performance-LR}
\end{table}

\begin{table}[h]
\centering
\begin{tabular}{|c||c||c||c|}
\hline
     L &  Work ($10^4$)&  Work $\cdot$Variance ($10^6$) &  Estimate (True Value:-24.963)\\ 
\hline
    30 & $5.09\pm0.01$ & $7957.25\pm3905.91$ & $-25.11\pm1.57$\\ 
\hline
    100 & $6.74\pm0.01$ & $412.25\pm201.43$ & $-24.77\pm0.31$\\ 
\hline
    200 & $8.07\pm0.02$ & $110.36\pm39.80$ & $-25.06\pm0.22$ \\ 
\hline
    500 & $11.27\pm0.05$ & $36.40\pm4.98$ & $-24.85\pm0.37$ \\ 
\hline
    1000 & $16.46\pm0.06$ & $32.15\pm3.59$ & $-25.12\pm0.45$  \\ 
\hline
    1500 & $21.46\pm0.05$ & $32.91\pm2.98$ &  $-24.65\pm0.38$ \\ 
\hline
    3000 & $36.65\pm0.05$  & $47.34\pm3.66$ & $-24.89\pm0.38$  \\ 
\hline
    10000 & $107.50\pm0.03$  & $215.92\pm7.80$ & $-24.85\pm0.35$  \\ 
\hline
\end{tabular}
\caption{Performance stats of $H_{4}^{L, 100, L}$ with different $N$}
\label{tab:performance-h4}
\end{table}

\mdefnota{chain-X}{$\{X_i\}_{i\geq 0}\text{ : a Markov chain on state space} \mathcal{X}$}
\mdefnota{state-space}{$\text{General state space for the Markov chains}$}
\mdefnota{P-theta}{$\left(P(\theta, x, dy): x, y \in \mathcal{X}\right)\text{is the one-step transition kernel of chain X parametrized by } \theta$}
\mdefnota{pi-theta}{Invariant distribution associated with $P(\theta)$}
\mdefnota{gamma}{$\mathbb{E}_{\pi(\theta)}[f(X)]$: The stationary mean}
\mdefnota{f}{A function that maps from $\mathcal{X}$ to $\mathbb{R}$ and is integrable under $\pi(\theta)$}
\mdefnota{gamma-prime}{$\frac{d}{d\theta}\mathbb{E}_{\pi(\theta)}[f(X)]$: The derivative of the stationary mean w.r.t. $\theta$}
\mdefnota{P+}{The positive part of the Hahn-Jordan decomposition of $P'(\theta)$}
\mdefnota{P-}{The negative part of the Hahn-Jordan decomposition of $P'(\theta)$}
\mdefnota{P-bar}{The joint transition kernel for a couple of Markov chains}
\mdefnota{mu-0}{Initial distribution for the Markov chain}
\mdefnota{tau^L}{$\inf\left(i \geq L: X_{i} = Y_{i-L}\right)$: The meeting time of the couple chain}
\mdefnota{couple-xy}{$\{X_i, Y_i\}_{i\geq 0}$, the coupled Markov chain on $\mathcal{X} \times \mathcal{X}$}
\mdefnota{k}{$k \geq 0$: burn-in period}
\mdefnota{m}{$m \geq k$: last index of the average in $H^L_{k:m}$}
\mdefnota{Hk}{$f(X_k)  + \sum_{t=1}^{\floor*{\frac{\tau^L-k}{L}}} f(X_{k + tL}) - f(Y_{k+(t-1)L})$: an unbiased estimator for $\gamma(\theta)$}
\mdefnota{Hk-1}{$H(X_k, X_{k+1})  + \sum_{t=1}^{\floor*{\frac{\tau^L-k}{L}}} H(X_{k + tL}, X_{k + tL + 1}) - H(Y_{k+(t-1)L}, Y_{k+(t-1)L + 1})$}
\mdefnota{Hk-2}{$H\left(\{X_i\}_{i=k}^{k+L}\right)  + \sum_{t=1}^{\floor*{\frac{\tau^L-k}{L}}} H\left(\{X_i\}_{i=k+tL}^{k+(t+1)L}\right) - H\left(\{Y_i\}_{i=k+(t-1)L}^{k+tL}\right)$}
\mdefnota{H-km}{$\frac{1}{m-k+1}\sum_{t=k}^{m}H_t^L$}
\mdefnota{L}{$L \geq 1$: the lagging parameter}
\mdefnota{ct}{$\floor*{\frac{\min(m+L, t) - k - (t - k)\% L}{L}}$: the coefficients in the bias correction term of $H^L_{k:m}$}
\mdefnota{g}{Solution to the Poisson's equation $g(x) - P(\theta)g(x) = f(x) - \gamma(\theta)$}
\mdefnota{Ph}{$\int_{\mathcal{X}}h(x)P(x, dy)$}
\mdefnota{mu-h}{$\int_{\mathcal{X}}h(x)\mu(dy)$}
\mdefnota{g-regen}{$\mathbb{E}_x\left[\sum_{t=0}^{T(\alpha)}\left(f(X_t) - \pi(\theta)f\right)\right]$: regeneration solution to the Poisson's equation}
\mdefnota{g-funda}{$\mathbb{E}_x^{\theta}\left[\sum_{t=0}^{\infty}\left(f(X_t) - \pi(\theta)f\right)\right]$: fundamental solution to the Poisson's equation}
\mdefnota{alp}{A regeneration state for the Markov chain}
\mdefnota{T-alp}{$\inf\{t > 0: X_t = \alpha\}$ The regeneration time for the Markov chain}
\mdefnota{tau-xz}{$\inf\{t \geq 0: X_t = Z_t\}$: The coupling time of the joint chain $(X,Z)$ that starts from $(x, z)$}
\mdefnota{Lam}{Space for parameter $\theta$}
\mdefnota{theta-0}{$\theta_0$: Current parameter at which we are trying to compute the derivative}
\mdefnota{G-z}{$\sum_{t=0}^{\tau_{x,z}}\left(f(X_t) - f(Z_t)\right)$: an unbiased estimator for $g_z(x)$}
\mdefnota{G-z-L}{$\sum_{i=0}^{\floor*{\frac{\tau_{x, z}}{L}}}f(X_{iL}) - f(Z_{iL})$: Unbiased estimator for the solution to the Poisson's equation associated with the $L$-skeleton chain}
\mdefnota{g-z}{$\mathbb{E}_{x, z}^{\theta}\left[\sum_{t=0}^{\tau_{x,z}}\left(f(X_t) - f(Z_t)\right)\right]$: solution to the Poisson's equation}
\mdefnota{g-z-L}{$\mathbb{E}\left[\sum_{i=0}^\infty f(X_{iL}) - f(Z_{iL})\right]$: solution to the Poisson's equation associated to the $L$-skeleton chain}
\mdefnota{h-x}{$\mathbb{E}_{x}^\theta\left[p'(\theta, x, X_1)g(X_1)\right]$}
\mdefnota{H-x0x1}{$p'(\theta_0, x_0, x_1)G_z(x_1)$}
\mdefnota{P-L}{$L$-step transition kernel}
\mdefnota{g-L}{Solution to $g^{L} - P^{L}(\theta)g^{L} = f - \pi(\theta)f$}
\mdefnota{H-0-L}{$\left(\sum_{i=0}^{L-1}p'(\theta_0, x_{i}, x_{i+1})\right)g^L(x_L)$}
\mdefnota{M-L}{$\sum_{i=0}^{L-1}p'(\theta_0, x_{i}, x_{i+1})$}
\mdefnota{K}{A small set}
\mdefnota{Gamma-x}{$\mathbb{E}_x^{\theta_0}\left[H\left(\{X_i\}_{i=0}^{L}\right)^{2\zeta}\right]$}
\mdefnota{omega-epsilon}{$\sup _{\left|\theta-\theta_0\right|<\epsilon}\left|p^{\prime}(\theta, x, y)\right|$}
\mdefnota{Bar-V}{$V(x) + \frac{b}{1-\lambda}$}
\mdefnota{C-kappa}{$\left[8(l-1) \max(1, 2^{l-3})\right]^{l}$}
\mdefnota{Omega}{$\int_{\mathcal{X}}\omega_{\epsilon}(x, y)^{\frac{2\kappa p}{p-1}}P(\theta_0, x, dy)$}
\mdefnota{Omega-2}{$\int_{\mathcal{X}}\omega_{\epsilon}(x, y)P(\theta_0, x, dy)$}
\mdefnota{f2p}{$f(x)^{2\kappa p+\delta}$}
\mdefnota{p}{$p > 1$}
\mdefnota{kappa}{$\kappa > 1$}
\mdefnota{delta}{$\delta > 0$}
\mdefnota{C-Gamma}{$C_\Gamma^{L} > 0$}
\mdefnota{D-Gamma}{$D_\Gamma^{L} > 0$}
\mdefnota{V}{Lyapunov function such that $P(\theta_0) V(x) \leq \lambda V(x)+b 1_K(x)$ holds}
\mdefnota{Bar-V}{$V(x) + \frac{b}{1-\lambda}$}
\mdefnota{SE-kL}{$H\left(\{X_i\}_{i=k}^{k+L}\right)$: single sample gradient estimator}
\mdefnota{BC-kL}{$\sum_{t=1}^{\floor*{\frac{\tau^L-k}{L}}} H\left(\{X_i\}_{i=k+tL}^{k+(t+1)L}\right) - H\left(\{Y_i\}_{i=k+(t-1)L}^{k+tL}\right)$: bias correction term to the singleton estimator}
\mdefnota{H-3}{$\frac{1}{m-k+1}\sum_{t=k}^{m}H_2^{t, L}(X, Y)$}
\mdefnota{H-4}{$\frac{1}{m}\sum_{t=0}^{m-1}H_2^{k+tL, L}(X, Y)$}
\mdefnota{sig-2}{$Var_{\pi(\theta_0)}\left(H\left(\{X_i\}_{i=0}^{L}\right)\right)$}
\mdefnota{rho-jL}{$Cov_{\pi(\theta_0)}\left(H\left(\{X_i\}_{i=0}^{L}\right), H\left(\{X_i\}_{i=jL}^{(j+1)L}\right)\right)$}
\mdefnota{tau-pi}{The coupling time of two chains starting from the stationary distribution $\pi(\theta_0)$ and a pre-specified state $z$}
\mdefnota{R}{$R < \infty$: Constant}
\mdefnota{r}{$0 < r < 1$: Constant}
\mdefnota{M}{$R < M < \infty$: Constant}
\mdefnota{rho}{$r < \rho < 1$: Constant}
\mdefnota{HLj}{$H\left(\{X_i\}_{i=j}^{j + L}\right)$}
\mdefnota{U-zeta}{$\left(L^{\zeta}A_{z} + L^{\zeta}\left(\frac{\left(\rho^{\frac{\delta}{2\zeta p(2\zeta p+\delta)}}\right)^L}{1-\left(\rho^{\frac{\delta}{2\zeta p(2\zeta p+\delta)}}\right)^L}\right)^{2\zeta}B_{z}\right)$}
\mdefnota{A_z}{$2^{4\zeta - 2}\left|f^{2\zeta p+\delta}\right|_{V}^{\frac{2\zeta}{2\zeta p+\delta}}\left(C_{\frac{2\zeta p}{p-1}}|\Omega^{\frac{2\zeta p}{p-1}}|_{V} \left(1 + \frac{b}{1-\lambda}\right)\right)^{\frac{p-1}{ p}}\left(1 +\left(\frac{b}{1-\lambda}\right)^{\frac{2\zeta}{2\zeta p+\delta}}+\Bar{V}(z)^{\frac{2\zeta}{2\zeta p+\delta}}\right)$}
\mdefnota{B_z}{$A_z^{\zeta} \cdot M^{\frac{2\zeta\delta}{2\zeta p(2\zeta p+\delta)}}\left(1 + \left(\frac{b}{1-\lambda}\right)^{\frac{2\zeta\delta}{2\zeta p(2\zeta p+\delta)}}+V(z)^{\frac{2\zeta\delta}{2\zeta p(2\zeta p+\delta)}}\right)$}
\mdefnota{W-L}{$\left(L + 2\mathbb{E}[\tau_{\pi(\theta_0), z}]\right)U_{1, z}(L)\pi(\theta_0)V\left(3 + \frac{2M\rho^L}{1-\rho^L}\right)$}
\mdefnota{S}{Exponential random variable with rate $\theta$}
\mdefnota{T}{Exponential random variable with rate $5$}
\mdefnota{g-fu-L}{$\mathbb{E}_{x}^{\theta_0}\left[\sum_{i}^{\infty}\bar{f}(X_{iL})\right]$}
\mdefnota{bar-f}{$f(x) - \pi(\theta_0)f$}

\bibliographystyle{plain}
\bibliography{refs.bib}

\ifshowtheoremlist
\newpage
\footnotesize
\linkdest{location of theorem list}
\footnotesize
\renewcommand{\listtheoremname}{List of Definitions and Notations}
\listoftheorems[ignoreall, show={definition}]
\renewcommand{\listtheoremname}{List of Theorems}
\listoftheorems[ignoreall, show={theorem,lemma,corollary,proposition,result}]
\renewcommand{\listtheoremname}{List of Assumptions}
\listoftheorems[ignoreall, show={assumption,condition}]
\renewcommand{\listtheoremname}{List of Remarks}
\listoftheorems[ignoreall, show={remark}]
\normalsize
\fi

\ifshownotationindex
\newpage
\linkdest{location of notation index}
\notationindex
\fi

\ifshowequationlist
\newpage
\section*{List of Numbered Equations}
\linkdest{location of equation number list}
\fi

\ifshownavigationpage
\newpage
\normalsize

\section*{Navigation Links}

\ifshowequationlist
    \noindent
    \hyperlink{location of equation number list}{Numbered Equations}
    \bigskip
\fi

\ifshowtheoremlist
    \noindent 
    \hyperlink{location of theorem list}{Theorem List}
    \bigskip
\fi

\ifshownotationindex
    \noindent
    \hyperlink{location of notation index}{Notation Index}
    
\fi

\ifshowtoc
    \newpage
    \setcounter{tocdepth}{2}
    \tableofcontents
\fi

\fi

\end{document}